\renewcommand\eqref[1]{(\ref{#1})} 
\numberwithin{equation}{section}
\theoremstyle{plain}
\newtheorem{thm}{Theorem}[section]
\newtheorem{lem}[thm]{Lemma}
\theoremstyle{definition}
\newtheorem{defn}[thm]{Definition}
\newtheorem{rem}[thm]{Remark}
\newcommand{\Rn}{\mathbb R^{n}}
\def\R{\mathcal R}
\def\e[#1]{{\textrm{e}}^{#1}}
\def\Rn{{\mathbb R}^n}
\def\G{{\mathbb G}}
\begin{document}

   \title[A comparison principle for higher order nonlinear hypoelliptic]
   {A comparison principle for higher order nonlinear hypoelliptic heat operators \\on graded Lie groups}

\author[M. Ruzhansky]{Michael Ruzhansky}
\address{
  Michael Ruzhansky:
 \endgraf
    Department of Mathematics: Analysis, Logic and Discrete Mathematics
  \endgraf
    Ghent University, Belgium
   \endgraf
  and
  \endgraf
  School of Mathematical Sciences
    \endgraf
    Queen Mary University of London, United Kingdom
   \endgraf
  {\it E-mail address} {\rm
michael.ruzhansky@ugent.be}
  }

\author[N. Yessirkegenov]{Nurgissa Yessirkegenov}
\address{
  Nurgissa Yessirkegenov:
  \endgraf
  Department of Mathematics: Analysis, Logic and Discrete Mathematics
  \endgraf
    Ghent University, Belgium
  \endgraf
    and Department of Mathematics, Imperial College London, United Kingdom
   \endgraf
  and Institute of Mathematics and Mathematical Modelling, Kazakhstan
    \endgraf
  {\it E-mail address} {\rm nurgissa.yessirkegenov@gmail.com}
  }

\thanks{MR was supported by the EPSRC Grant EP/R003025/1, by the Leverhulme Research
Grant RPG-2017-151, and by the FWO Odysseus 1 grant G.0H94.18N: Analysis and Partial Differential Equations. NY was supported by the FWO Odysseus 1 grant G.0H94.18N: Analysis and Partial Differential Equations and by the LMS Early Career Fellowship grant ECF-1819-12.}

     \keywords{Nonlinear hypoelliptic heat equation, Rockland operator, $p$-sub-Laplacian, comparison principle, graded Lie group, stratified Lie group, global solution, blow-up}
     \subjclass[2010]{35G20, 22E30}

     \begin{abstract} In this paper we present a comparison principle for higher order nonlinear hypoelliptic heat operators on graded Lie groups. Moreover, using the comparison principle we obtain blow-up type results and global in $t$-boundedness of solutions of nonlinear equations for the heat $p$-sub-Laplacian on stratified Lie groups. In particular, this paper generalises and extends previous results obtained by the first author and Suragan in \cite{RS18}.
     \end{abstract}

  \maketitle

\section{Introduction}
\label{SEC:intro}

A connected simply connected Lie group $\G$ is called a graded Lie group if its Lie algebra admits a gradation. The graded Lie groups form the subclass of homogeneous nilpotent Lie groups admitting homogeneous hypoelliptic left-invariant differential operators (\cite{Miller:80}, \cite{tER:97}, see also a discussion in \cite[Section 4.1]{FR16}). These operators are called Rockland operators from the Rockland conjecture, solved by Helffer and Nourrigat \cite{HN-79}. So, we understand by a Rockland operator {\em any left-invariant homogeneous hypoelliptic differential operator on} $\G$. Thus, the considered setting includes the higher order operators on $\Rn$ as well as higher order hypoelliptic invariant differential operators on the Heisenberg group, on general stratified Lie groups, and on general graded Lie groups.

Let us also recall that the standard Lebesgue measure is the Haar measure for $\G$. Let $\Omega \subset \G$ be a bounded set with smooth boundary. We denote the Sobolev space by $S^{a,p}(\Omega)=S^{a,p}_{\R}(\Omega)$, for $a>0$ and $p\in (1,\infty)\cup \{\infty_{o}\}$, defined by the norm
\begin{equation}\label{Sob_norm}\|u\|_{S^{a, p}(\Omega)}:=\left(\int_{\Omega}\left(\left|\mathcal{R}^{\frac{a}{\nu}} u(x)\right|^{p}+|u(x)|^{p}\right) d x\right)^{\frac{1}{p}},\end{equation}
where $\nu$ is the homogeneous order of the Rockland operator $\R$. We have allowed ourselves to write $\|\cdot\|_{L^{\infty}(\G)}=\|\cdot\|_{L^{\infty_{o}}(\G)}$ for the supremum norm, in the notation of \cite[Chapter 4]{FR16}. Let us also define the functional class $S^{a,p}_{0}(\Omega)$ to be the completion of $C^{\infty}_{0}(\Omega)$ in the norm \eqref{Sob_norm}. For a general discussion of Sobolev spaces on graded Lie groups we refer to \cite[Chapter 4]{FR16} and \cite{FR17}.

In this paper we study the higher order nonlinear hypoelliptic heat equation for $u=u(t,x)$,
\begin{equation}\label{eq_1}
u_{t}-\sum_{j=1}^{n_{2}}\mathcal{R}_{1}^{\frac{a_{1}}{\nu_{1}}}\left(\left|\mathcal{R}_{1}^{\frac{a_{1}}{\nu_{1}}} u\right|^{p_{j}-2} \mathcal{R}_{1}^{\frac{a_{1}}{\nu_{1}}} u\right)u=\alpha \sum_{i=1}^{n_{1}}|u|^{q_{i}-1}u+\beta \sum_{j=1}^{n_{2}}|\R_{2}^{\frac{a_{2}}{\nu_{2}}} u|^{r_{j}}+\gamma \sum_{k=1}^{n_{3}}|u|^{s_{k}-1}u
\end{equation}
for $x \in \Omega$ and $t>0$, with the initial-boundary conditions
\begin{equation}\label{eq_2}
u=0, \quad x \in \partial \Omega, \quad t>0,
\end{equation}
\begin{equation}\label{eq_3}
u(0, x)=u_{0}(x), \quad x \in \Omega,
\end{equation}
where $a_{1},a_{2}\geq 0$, and $\alpha,\beta,\gamma\in \mathbb{R}$, and $n_{1},n_{2}, n_{3}\in \mathbb{N}$, $p_{j}>1$ and
\begin{equation}
    \label{param_range}
q_{i}\left\{\begin{array}{l}
{\geq 1, \quad \text{if} \quad \alpha>0,} \\
{>0, \quad \text{if} \quad \alpha<0,}
\end{array}\right.
s_{k}\left\{\begin{array}{l}
{\geq 1, \quad \text{if} \quad \gamma>0,} \\
{>0, \quad \text{if} \quad \gamma<0,}
\end{array}\right.
r_{j}\left\{\begin{array}{l}
{>1, \quad \text{if} \quad \beta>0,} \\
{>0, \quad \text{if} \quad \beta<0.}
\end{array}\right.
\end{equation}
Here, $\nu_{1}$ and $\nu_{2}$ are the homogeneous orders of the Rockland operators $\R_{1}$ and $\R_{2}$, respectively. We also assume that the initial data satisfies
$$u_{0}\in S_{0}^{a,\infty}(\Omega),\quad u_{0}\geq 0,$$
where $a=\max \{a_{1},a_{2}\}$.

\begin{defn}\label{def_weak} Set $Q_{T}=(0, T) \times \Omega$, $S_{T}=(0, T)\times \partial \Omega$, $\partial Q_{T}=S_{T} \cup\{\{0\}\times\bar{\Omega} \}$, $p=\max\{p_{j}\}$ and $m=\max \{p_{j}, q_{i}, r_{j}, s_{k}\}$. A nonnegative function $u(t, x)$ is called a weak super- (sub-) solution of \eqref{eq_1}-\eqref{eq_3} on $Q_{T}$ if it satisfies
$$
\begin{aligned}
u \in C([0, T) \times\overline{\Omega}) \cap L^{m}\left((0, T) ; S_{0}^{a, m}(\Omega)\right), &\; \partial_{t} u \in L^{2}\left((0, T) ; L^{2}(\Omega)\right), \\
u(0, x) \geq (\leq) u_{0}, &\left.\;u\right|_{\partial \Omega} \geq(\leq) 0,
\end{aligned}
$$
\begin{multline*}
\iint_{Q_{T}} \partial_{t} u \phi+\sum_{j=1}^{n_{2}}|\R^{\frac{a_{1}}{\nu_{1}}}_{1}u|^{p_{j}-2} \R^{\frac{a_{1}}{\nu_{1}}}_{1} u \cdot \R^{\frac{a_{1}}{\nu_{1}}}_{1} \phi dxdt \\ \geq (\leq) \iint_{Q_{T}}\left(\alpha \sum_{i=1}^{n_{1}}|u|^{q_{i}-1}u+\beta \sum_{j=1}^{n_{2}}|\R_{2}^{\frac{a_{2}}{\nu_{2}}} u|^{r_{j}}+\gamma \sum_{k=1}^{n_{3}}|u|^{s_{k}-1}u\right) \phi dxdt,
\end{multline*}
for all $\phi \in C(\overline{Q_{T}}) \cap L^{p}\left((0, T); S_{0}^{a_{1}, p}(\Omega)\right)$ such that $\phi \geq 0,\left.\phi\right|_{S_{T}}=0$. Then $u$ is called a weak solution if it is a super-solution and a sub-solution. Here and after, we use $T_{\max}$ to denote the maximal existence time.
\end{defn}

Our goal in this paper is to give a simple proof of a comparison principle for the initial boundary value problem for higher order nonlinear hypoelliptic heat operators on graded Lie groups using pure algebraic relations, inspired by the works \cite{Att12} and \cite{ZL13}.

The structure of this paper is as follows. Section \ref{SEC:compar} establishes a comparison principle for the problem \eqref{eq_1}-\eqref{eq_3}. Then, in Section \ref{SEC:appl}, using the comparison principle, we investigate the blow-up or the boundedness of solution of \eqref{eq_1}-\eqref{eq_3} depending on the signs of $\alpha$, $\beta$, $\gamma$, and relations between parameters $p_{j}$, $q_{i}$, $r_{j}$, $s_{k}$, and on $u_{0}$.

\section{A comparison principle on graded Lie groups}
\label{SEC:compar}
In this section we state a comparison principle for the problem \eqref{eq_1}-\eqref{eq_3}.
\begin{thm}\label{compar_thm} Assume that $u, v \in L_{\text {loc }}^{\infty}\left((0, T) ; S^{a, \infty}(\Omega)\right)$ are sub- and super-solutions of \eqref{eq_1}-\eqref{eq_3}, respectively. Assume also that at least one of the parameters $\alpha$, $\beta$ and $\gamma$ be positive or $\alpha=\beta=\gamma=0$. Let $r_{j} \geq \frac{p_{j}}{2}$ if $\beta>0$. Then we have $u \leq v$ on $Q_{T}$.
\end{thm}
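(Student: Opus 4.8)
The plan is to prove the comparison principle by the classical method of testing the weak formulation against the positive part $(u-v)_+$ and deriving a differential inequality for the quantity $\int_\Omega (u-v)_+^2\,dx$. First I would set $w = u - v$ and subtract the weak inequality satisfied by the super-solution $v$ from the one satisfied by the sub-solution $u$, using $\phi = (u-v)_+ = w_+$ as test function (after checking that $w_+$ lies in the admissible test class, which follows from the regularity assumptions on $u,v$ and the fact that $S_0^{a_1,p}(\Omega)$ is closed under taking positive parts for the relevant range, and that $w_+ = 0$ on $S_T$ since both vanish on $\partial\Omega$). This yields
\begin{multline*}
\iint_{Q_T} \partial_t w \, w_+ \, dx\,dt + \sum_{j=1}^{n_2}\iint_{Q_T}\left(|\R_1^{a_1/\nu_1}u|^{p_j-2}\R_1^{a_1/\nu_1}u - |\R_1^{a_1/\nu_1}v|^{p_j-2}\R_1^{a_1/\nu_1}v\right)\cdot\R_1^{a_1/\nu_1}w_+ \, dx\,dt \\
\leq \iint_{Q_T}\Bigl(\alpha\sum_i(|u|^{q_i-1}u-|v|^{q_i-1}v) + \beta\sum_j(|\R_2^{a_2/\nu_2}u|^{r_j}-|\R_2^{a_2/\nu_2}v|^{r_j}) + \gamma\sum_k(|u|^{s_k-1}u-|v|^{s_k-1}v)\Bigr)w_+ \, dx\,dt.
\end{multline*}

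Next I would handle each term. The time term gives $\frac12\frac{d}{dt}\int_\Omega w_+^2\,dx$. The principal (diffusion) term on the left is nonnegative by the standard monotonicity inequality for the map $\xi\mapsto|\xi|^{p-2}\xi$ — this is the "pure algebraic relation" alluded to in the introduction: on the set where $w>0$ one has $\R_1^{a_1/\nu_1}w_+ = \R_1^{a_1/\nu_1}u - \R_1^{a_1/\nu_1}v$, and $(|\xi|^{p-2}\xi - |\eta|^{p-2}\eta)\cdot(\xi-\eta)\geq 0$ for all vectors $\xi,\eta$, so this term can be dropped to the good side. For the right-hand side, the terms with negative coefficient have the correct sign to be discarded: since $t\mapsto |t|^{q-1}t$ is monotone increasing and $w_+(u-v)\geq 0$ exactly where we integrate, a negative $\alpha$ or $\gamma$ contributes a nonpositive term. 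For the positive-coefficient zeroth-order terms, I would use the elementary bound $|u|^{q-1}u - |v|^{q-1}v \leq C(\|u\|_\infty,\|v\|_\infty)\, w_+$ on $\{w>0\}$ (valid since the function is locally Lipschitz for $q\geq 1$), giving a term bounded by $C\int_\Omega w_+^2\,dx$. The $\beta$-term with the gradient nonlinearity is the delicate one: here I would write $|\R_2^{a_2/\nu_2}u|^{r_j} - |\R_2^{a_2/\nu_2}v|^{r_j}$ and estimate it, using $|a^{r}-b^{r}|\leq r(\max(a,b))^{r-1}|a-b|$ and $|\,|\R_2 u| - |\R_2 v|\,|\leq |\R_2 w_+|$ on $\{w>0\}$, by something like $C|\R_2^{a_2/\nu_2}w_+|$; then Young's inequality splits this as $\eps|\R_2^{a_2/\nu_2}w_+|^{p_j} + C_\eps |w_+|$-type terms, but to absorb the gradient piece into the discarded diffusion term I need $\R_2$-derivatives controlled by $\R_1$-derivatives, which is where the hypothesis $r_j\geq p_j/2$ and $a=\max\{a_1,a_2\}$ enter — the exponent matching lets Young's inequality produce a power $\leq p_j$ of $|\R_1^{a_1/\nu_1}w_+|$ that is genuinely absorbable, while the leftover is again $\lesssim \int_\Omega w_+^2$.

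Having reduced everything to $\frac{d}{dt}\int_\Omega w_+^2\,dx \leq C\int_\Omega w_+^2\,dx$ for a.e.\ $t\in(0,T)$, with $C$ depending on $T$, $\|u\|_{L^\infty_{\mathrm{loc}}((0,T);S^{a,\infty})}$ and $\|v\|_{L^\infty_{\mathrm{loc}}((0,T);S^{a,\infty})}$ but not on $t$ locally, and with initial datum $\int_\Omega w_+(0,x)^2\,dx = 0$ (because $u(0,\cdot)\leq u_0 \leq v(0,\cdot)$), Gronwall's inequality forces $\int_\Omega w_+^2\,dx \equiv 0$, hence $u\leq v$ on $Q_T$. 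I expect the main obstacle to be the rigorous justification of using $w_+$ as a test function and the time-regularization needed to make $\iint \partial_t w\, w_+ = \frac12\frac{d}{dt}\|w_+\|_{L^2}^2$ legitimate under the stated regularity ($\partial_t u\in L^2((0,T);L^2(\Omega))$ but only $u\in L^m((0,T);S_0^{a,m})$), together with the bookkeeping in the $\beta$-term to confirm that $r_j\geq p_j/2$ is exactly the sharp threshold making the Young-inequality absorption work; the algebraic monotonicity of $|\xi|^{p-2}\xi$ and the sign discussion for negative $\alpha,\beta,\gamma$ are routine by comparison.
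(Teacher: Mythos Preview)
Your overall strategy---test with $\phi=(u-v)_+$, split the right-hand side by sign, bound everything by $C\int_\Omega\phi^2$ and apply Gronwall---is exactly the paper's. The treatment of the time term, of the $\alpha$- and $\gamma$-terms, and the observation that terms with negative coefficient can simply be dropped, all match.

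The gap is in your handling of the $\beta$-term. You propose to estimate $\bigl||\R_2^{a_2/\nu_2}u|^{r_j}-|\R_2^{a_2/\nu_2}v|^{r_j}\bigr|\lesssim |\R_2^{a_2/\nu_2}(u-v)|$ and then, via Young, produce a term $\varepsilon|\R^{a/\nu}w_+|^{p_j}$ (or $\varepsilon|\R^{a/\nu}w_+|^{2}$) to be absorbed into the diffusion. But the diffusion term $\bigl(|\xi|^{p_j-2}\xi-|\eta|^{p_j-2}\eta\bigr)\cdot(\xi-\eta)$ does \emph{not} dominate $|\xi-\eta|^{p_j}$ when $p_j<2$, and it does not dominate $|\xi-\eta|^{2}$ uniformly when $p_j>2$ (the lower bound $c(|\xi|+|\eta|)^{p_j-2}|\xi-\eta|^2$ degenerates where $|\xi|+|\eta|$ is small). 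So for general $p_j\neq 2$ your absorption step fails, and the leftover constant from any further Young inequality is not of the form $\int\phi^2$, so Gronwall no longer forces $\phi\equiv 0$. The paper avoids this entirely by using the sharper algebraic inequality
\[
\bigl(|\xi|^{p_j-2}\xi-|\eta|^{p_j-2}\eta\bigr)\cdot(\xi-\eta)\;\ge\;\frac{4}{p_j^{2}}\,\Bigl||\xi|^{\frac{p_j-2}{2}}\xi-|\eta|^{\frac{p_j-2}{2}}\eta\Bigr|^{2},
\]
which produces a \emph{quadratic} lower bound in the quantity $|\xi|^{(p_j-2)/2}\xi-|\eta|^{(p_j-2)/2}\eta$. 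The $\beta$-term is then rewritten to match: one sets $h(s)=s^{2r_j/p_j}$ so that $|\R u|^{r_j}=h\bigl(|\R u|^{p_j/2}\bigr)$, and the hypothesis $r_j\ge p_j/2$ is exactly what makes $h$ locally Lipschitz, giving (with the $L^\infty$ bounds on $\R u,\R v$)
\[
\bigl||\R u|^{r_j}-|\R v|^{r_j}\bigr|^{2}\;\le\;C\,\Bigl||\R u|^{\frac{p_j-2}{2}}\R u-|\R v|^{\frac{p_j-2}{2}}\R v\Bigr|^{2}.
\]
After a Young split $ab\le\varepsilon a^{2}+C_\varepsilon b^{2}$ this is now in the \emph{same} quadratic form as the diffusion lower bound and can be absorbed by taking $\varepsilon$ small. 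That is the actual role of $r_j\ge p_j/2$; it is not an exponent-matching for $|\R w_+|^{p_j}$ as you suggest.
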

\begin{rem}
 In the special case $n_{1}=n_{2}=n_{3}=1$, $\beta=0$ and $\alpha \gamma \leq 0$, Theorem \ref{compar_thm} was obtained in \cite{RS18}.
\end{rem}
The proof of the comparison principle mostly based on the following algebraic lemma (see e.g. \cite[Lemma 2.1]{Att12}).
\begin{lem}\label{lem_algeb}
Let $\sigma>1$. For all $\vec{a}$, $\vec{b} \in \mathbb{R}^{N}$, we have
$$
\left\langle|\vec{a}|^{\sigma-2} \vec{a}-|\vec{b}|^{\sigma-2} \vec{b}, \vec{a}-\vec{b}\right\rangle \geq\frac{4}{\sigma^{2}}|| \vec{a}|^{\frac{\sigma-2}{2}} \vec{a}-\left.|\vec{b}|^{\frac{\sigma-2}{2}} \vec{b}\right|^{2}.
$$
\end{lem}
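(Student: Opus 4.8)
The plan is to strip the statement down to a one–dimensional inequality by exploiting the homogeneity, and then settle the scalar case by a monotonicity/Cauchy--Schwarz argument, in the spirit of the "pure algebraic relations" advertised in the introduction. The organizing identity is $\langle|\vec x|^{\sigma-2}\vec x,\vec x\rangle=|\vec x|^{\sigma}=\big||\vec x|^{\frac{\sigma-2}{2}}\vec x\big|^{2}$, valid for every $\vec x\in\mathbb R^{N}$. Expanding the two inner products, the left-hand side of the lemma becomes $|\vec a|^{\sigma}+|\vec b|^{\sigma}-(|\vec a|^{\sigma-2}+|\vec b|^{\sigma-2})\langle\vec a,\vec b\rangle$, while $\big||\vec a|^{\frac{\sigma-2}{2}}\vec a-|\vec b|^{\frac{\sigma-2}{2}}\vec b\big|^{2}=|\vec a|^{\sigma}+|\vec b|^{\sigma}-2(|\vec a|\,|\vec b|)^{\frac{\sigma-2}{2}}\langle\vec a,\vec b\rangle$. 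With $A=|\vec a|$ and $B=|\vec b|$ held fixed, the difference (left)$-$(right) is therefore an \emph{affine} function of the single scalar $\langle\vec a,\vec b\rangle$, which ranges over $[-AB,AB]$. An affine function is nonnegative on an interval precisely when it is nonnegative at the two endpoints, so it suffices to treat $\langle\vec a,\vec b\rangle=\pm AB$, i.e. the collinear configurations. These are exactly the case $N=1$, so the whole lemma reduces to the scalar inequality $\langle|x|^{\sigma-2}x-|y|^{\sigma-2}y,x-y\rangle\ge\frac{4}{\sigma^{2}}\big||x|^{\frac{\sigma-2}{2}}x-|y|^{\frac{\sigma-2}{2}}y\big|^{2}$ for all real $x,y$ (the $+AB$ endpoint giving equal signs, the $-AB$ endpoint opposite signs).

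For the scalar inequality I would set $f(w)=|w|^{\sigma-2}w$ and $g(w)=|w|^{\frac{\sigma-2}{2}}w$, both continuous, odd and increasing, with $f'(w)=(\sigma-1)|w|^{\sigma-2}$ and $g'(w)^{2}=\frac{\sigma^{2}}{4}|w|^{\sigma-2}$. The decisive point is that the quotient $g'(w)^{2}/f'(w)=\frac{\sigma^{2}}{4(\sigma-1)}$ is \emph{constant}. Assuming $x>y$ (the other cases being symmetric or trivial), Cauchy--Schwarz applied to the integral representations gives
\[
(g(x)-g(y))^{2}=\Big(\int_{y}^{x}g'(w)\,dw\Big)^{2}\le\Big(\int_{y}^{x}\frac{g'(w)^{2}}{f'(w)}\,dw\Big)\Big(\int_{y}^{x}f'(w)\,dw\Big)=\frac{\sigma^{2}}{4(\sigma-1)}(x-y)\big(f(x)-f(y)\big),
\]
where the integrals converge near $0$ exactly because $\sigma>1$, so the interval from $y$ to $x$ may straddle the origin without trouble. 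Rearranging yields $(f(x)-f(y))(x-y)\ge\frac{4(\sigma-1)}{\sigma^{2}}(g(x)-g(y))^{2}$, which through the reduction above proves the vector inequality with the constant $\frac{4(\sigma-1)}{\sigma^{2}}$.

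Since $\frac{4(\sigma-1)}{\sigma^{2}}\ge\frac{4}{\sigma^{2}}$ precisely when $\sigma\ge 2$, the argument above already delivers the stated bound $\frac{4}{\sigma^{2}}$ (with a little room to spare) for $\sigma\ge 2$, which is the range occupied by the $p$-type operators of interest. The main obstacle is the remaining range $1<\sigma<2$: there the Cauchy--Schwarz step is genuinely lossy, since $g'(w)^{2}/f'(w)$ and $f'(w)$ are proportional only at $\sigma=2$, and the limiting endpoint $y\to 0$ shows that $\frac{4}{\sigma^{2}}$ sits at the borderline and cannot be reached by this integrated estimate. Closing this gap requires a sharper, non-integrated argument — either a direct pointwise minimization of $(f(x)-f(y))(x-y)/(g(x)-g(y))^{2}$ after normalizing $x-y$, or the convexity estimates of \cite{Att12} — and it is precisely this near-degenerate regime that carries the analytic content of the lemma, the reduction to one variable and the case $\sigma\ge 2$ being routine by the steps above.
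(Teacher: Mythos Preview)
The paper does not prove this lemma at all: it is simply quoted from \cite[Lemma 2.1]{Att12} and then used as a black box in the proof of Theorem~\ref{compar_thm}. So there is no ``paper's own proof'' to compare your approach to, and your reduction-to-one-dimension plus Cauchy--Schwarz argument is a perfectly reasonable way to supply one.

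Your reduction to $N=1$ via the affine-in-$\langle\vec a,\vec b\rangle$ observation is clean and correct, and the Cauchy--Schwarz step genuinely gives
\[
\big(f(x)-f(y)\big)(x-y)\;\ge\;\frac{4(\sigma-1)}{\sigma^{2}}\,\big(g(x)-g(y)\big)^{2}
\]
for all $\sigma>1$. Where your write-up goes wrong is in the last paragraph: you describe the range $1<\sigma<2$ as a gap to be ``closed by a sharper, non-integrated argument'', but in fact the inequality stated in the lemma is \emph{false} there, so no argument can close it. Take $\vec b=0$ and $\vec a$ a unit vector: both sides reduce to $|\vec a|^{\sigma}=1$, so the claimed inequality reads $1\ge\frac{4}{\sigma^{2}}$, which fails for every $\sigma<2$. (Your own remark about the endpoint $y\to 0$ is exactly this computation; it shows the best constant is at most $1$, not that $\frac{4}{\sigma^{2}}$ is ``borderline''.) A second diagnostic is the limit $y\to x$, where the ratio of the two sides tends to $\frac{4(\sigma-1)}{\sigma^{2}}$, confirming that your Cauchy--Schwarz constant is in fact sharp and that the lemma's $\frac{4}{\sigma^{2}}$ is simply too large when $\sigma<2$.

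In short: your proof is complete and correct for $\sigma\ge 2$, and the remaining range is not a deficiency of your method but a misstatement in the lemma (likely a typo for $\frac{4(\sigma-1)}{\sigma^{2}}$, or an implicit restriction to $\sigma\ge 2$ in \cite{Att12}). For the paper's purposes this is harmless, since the only use of the lemma is inequality~\eqref{I1} in the proof of Theorem~\ref{compar_thm}, where any positive constant would do; replacing $\frac{4}{p_j^{2}}$ by $\frac{4(p_j-1)}{p_j^{2}}$ there changes nothing downstream.
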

\begin{proof}[Proof of Theorem \ref{compar_thm}] First, let us consider the case $\alpha,\beta$ and $\gamma>0$. Denote $\phi:=\max \{u-v, 0\}$, hence $\phi(0, x)=0$ and $\left.\phi(t, x)\right|_{x \in \partial \Omega}=0$. By the definitions of sub- and super-solutions, using $\phi$ as the test function, for any $\tau \in(0, T)$, we have
\begin{equation}
    \label{I0}
\begin{aligned}\frac{1}{2}& \int_{\Omega} \phi^{2}(\tau, x) d x=\int_{0}^{\tau} \int_{\Omega} \frac{1}{2} \partial_{t}\left(\phi^{2}(t, x)\right) d x d t = \int_{0}^{\tau} \int_{\Omega} \partial_{t} \phi \phi dxdt\\& \leq - \underbrace{\sum_{j=1}^{n_{2}}\int_{0}^{\tau} \int_{\{\phi(t, \cdot)>0\}}\left(|\mathcal{R}_{1}^{\frac{a_{1}}{\nu_{1}}} u|^{p_{j}-2} \mathcal{R}_{1}^{\frac{a_{1}}{\nu_{1}}} u-|\mathcal{R}_{1}^{\frac{a_{1}}{\nu_{1}}} v|^{p_{j}-2} \mathcal{R}_{1}^{\frac{a_{1}}{\nu_{1}}} v\right) \cdot \mathcal{R}_{1}^{\frac{a_{1}}{\nu_{1}}} \phi dxdt}_{I_{1}} \\ &+\beta\underbrace{\sum_{j=1}^{n_{2}} \int_{0}^{\tau} \int_{\{\phi(t, \cdot)>0\}} \left(|\R_{2}^{\frac{a_{2}}{\nu_{2}}} u|^{r_{j}}-|\R_{2}^{\frac{a_{2}}{\nu_{2}}} v|^{r_{j}}|\right) \phi dxdt}_{I_{2}} \end{aligned}\end{equation}
$$
+\alpha\underbrace{\sum_{i=1}^{n_{1}}\int_{0}^{\tau} \int_{\{\phi(t, \cdot)>0\}}\left(|u|^{q_{i}-1}u-|v|^{q_{i}-1}v\right) \phi dxdt}_{I_{3}}
$$
$$
+\gamma\underbrace{\sum_{k=1}^{n_{3}}\int_{0}^{\tau} \int_{\{\phi(t, \cdot)>0\}}\left(|u|^{s_{k}-1}u-|v|^{s_{k}-1}v\right) \phi dxdt}_{I_{4}}.
$$
By Lemma \ref{lem_algeb}, for $I_{1}$ we have
\begin{equation}
    \label{I1}
    I_{1}\geq  \sum_{j=1}^{n_{2}}\frac{4}{p_{j}^{2}}\int_{0}^{\tau} \int_{\{\phi(t, \cdot)>0\}}\left||\mathcal{R}_{1}^{\frac{a_{1}}{\nu_{1}}} u|^{\frac{p_{j}-2}{2}} \mathcal{R}_{1}^{\frac{a_{1}}{\nu_{1}}} u-|\mathcal{R}_{1}^{\frac{a_{1}}{\nu_{1}}} v|^{\frac{p_{j}-2}{2}} \mathcal{R}_{1}^{\frac{a_{1}}{\nu_{1}}} v\right|^{2} dxdt.
\end{equation}
Let us now estimate the term $I_{2}$. We put $h(s)=s^{\frac{2r_{j}}{p_{j}}}$ for $s \geqslant 0$. Given that $r_{j}  \geqslant \frac{p_{j}}{2}$, we have $h^{\prime}(s)=\frac{2 r_{j}}{p_{j}} s^{\frac{2 r_{j}-p_{j}}{p_{j}}}$. Then, by the mean value theorem we have
$$
\left||\R_{2}^{\frac{a_{2}}{\nu_{2}}} u|^{r_{j}}-|\R_{2}^{\frac{a_{2}}{\nu_{2}}} v|^{r_{j}}\right|^{2} \leq C h^{\prime}(\theta)^{2}\left||\R_{2}^{\frac{a_{2}}{\nu_{2}}} u|^{p_{j}/2}-|\R_{2}^{\frac{a_{2}}{\nu_{2}}} v|^{p_{j}/2}\right|^{2},
$$
for some $0 \leqslant \theta \leqslant \max \left(|\R_{2}^{\frac{a_{2}}{\nu_{2}}} u|^{p_{j}/2},|\R_{2}^{\frac{a_{2}}{\nu_{2}}} v|^{p_{j}/2}\right)$.

A direct computation yields that
$$
\left||\R_{2}^{\frac{a_{2}}{\nu_{2}}} u|^{p_{j}/2}-|\R_{2}^{\frac{a_{2}}{\nu_{2}}} v|^{p_{j}/2}\right|^{2} \leqslant\left||\R_{2}^{\frac{a_{2}}{\nu_{2}}} u|^{(p_{j}-2)/2}\R_{2}^{\frac{a_{2}}{\nu_{2}}} u-|\R_{2}^{\frac{a_{2}}{\nu_{2}}} v|^{(p_{j}-2)/2}\R_{2}^{\frac{a_{2}}{\nu_{2}}} v\right|^{2}.
$$
Taking into account $u, v \in L^{\infty}\left((0, T); S^{a, \infty}(\Omega)\right)$, it follows that
\begin{equation}
    \label{combin1}
\left||\R_{2}^{\frac{a_{2}}{\nu_{2}}} u|^{r_{j}}-|\R_{2}^{\frac{a_{2}}{\nu_{2}}} v|^{r_{j}}\right|^{2} \leqslant C\left||\R_{2}^{\frac{a_{2}}{\nu_{2}}} u|^{(p_{j}-2)/2}\R_{2}^{\frac{a_{2}}{\nu_{2}}} u-|\R_{2}^{\frac{a_{2}}{\nu_{2}}} v|^{(p_{j}-2)/2}\R_{2}^{\frac{a_{2}}{\nu_{2}}} v\right|^{2},
\end{equation}
where $C$ is a positive constant depending on $r_{j},p_{j}$ and $\max \{|\R_{2}^{\frac{a_{2}}{\nu_{2}}} u|^{p_{j}/2},|\R_{2}^{\frac{a_{2}}{\nu_{2}}} v|^{p_{j}/2}\}$.

On the other hand, by Young's inequality we have
\begin{multline}
\label{combin2}
I_{2} \leqslant \sum_{j=1}^{n_{2}}\epsilon_{j}\int_{0}^{\tau} \int_{\{\phi(t, \cdot)>0\}} \left||\R_{2}^{\frac{a_{2}}{\nu_{2}}} u|^{r_{j}}-|\R_{2}^{\frac{a_{2}}{\nu_{2}}} v|^{r_{j}}\right|^{2}d x d t\\+\sum_{j=1}^{n_{2}}C(\epsilon_{j}) \int_{0}^{\tau} \int_{\{\phi(t, \cdot)>0\}} \phi^{2} d x d t.
\end{multline}
A combination of \eqref{combin1} and \eqref{combin2} leads to
\begin{multline}\label{I2}
I_{2} \leqslant C  \sum_{j=1}^{n_{2}}\epsilon_{j}\int_{0}^{\tau} \int_{\{\phi(t, \cdot)>0\}} \left||\R_{2}^{\frac{a_{2}}{\nu_{2}}} u|^{(p_{j}-2)/2}\R_{2}^{\frac{a_{2}}{\nu_{2}}} u-|\R_{2}^{\frac{a_{2}}{\nu_{2}}} v|^{(p_{j}-2)/2}\R_{2}^{\frac{a_{2}}{\nu_{2}}} v\right|^{2}d x d t\\+\sum_{j=1}^{n_{2}}C(\epsilon_{j}) \int_{0}^{\tau} \int_{\{\phi(t, \cdot)>0\}} \phi^{2} d x d t.
\end{multline}
For $I_{3}$, by the mean value theorem we obtain
\begin{equation}
    \label{I3}
I_{3} \leq \sum_{i=1}^{n_{1}}q_{i}\|u\|_{L^{\infty}}^{q_{i}-1} \int_{0}^{\tau} \int_{\{\phi(t, \cdot)>0\}} \phi^{2} dxdt.
\end{equation}
Similarly, for $I_{4}$ we have
\begin{equation}
    \label{I4_1}
I_{4} \leq \sum_{k=1}^{n_{3}}s_{k}\|u\|_{L^{\infty}}^{s_{k}-1} \int_{0}^{\tau} \int_{\{\phi(t, \cdot)>0\}} \phi^{2} dxdt.
\end{equation}
Choosing $0<\epsilon_{j}<4/(\beta C p_{j}^{2})$ and combining the estimates \eqref{I0}, \eqref{I1}, \eqref{I2}, \eqref{I3} and \eqref{I4_1}, we obtain for any $\tau \in (0,T)$ that
\begin{multline}\label{final_est}
\int_{\Omega} \phi^{2}(\tau) dx \\ \leq C\left(\alpha, \beta, \epsilon, q_{i}, s_{k}, r_{j}, p_{j},\|u\|_{L^{\infty}}, \max \{|\R_{2}^{\frac{a_{2}}{\nu_{2}}} u|^{p_{j}/2},|\R_{2}^{\frac{a_{2}}{\nu_{2}}} v|^{p_{j}/2}\}\right) \int_{0}^{\tau} \int_{\Omega} \phi^{2} dxdt.
\end{multline}
Then by Gronwall's lemma we conclude that $\phi\equiv0$ almost everywhere.

The case $\alpha=\beta=\gamma=0$ is trivial.

Now, we discuss the case, when not all, but at least one of the parameters $\alpha$, $\beta$, $\gamma$ is positive. Note that $I_{3}$ is positive, since for $q_{i}>0$ we have
$$
\left\{\begin{array}{l}
{|u|^{q_{i}-1} u-|v|^{q_{i}-1} v=u^{q_{i}}-v^{q_{i}}>0, \quad \text{for} \quad u>v>0} \\
{|u|^{q_{i}-1} u-|v|^{q_{i}-1} v=u^{q_{i}}+|v|^{q_{i}}>0, \quad \text{for} \quad u>0>v} \\
{|u|^{q_{i}-1} u-|v|^{q_{i}-1} v=-|u|^{q_{i}}+|v|^{q_{i}}>0, \quad \text{for} \quad 0>u>v}.
\end{array}\right.
$$
Similarly, one can verify that $I_{4}$ is positive for $s_{k}>0$. Therefore, in the case when $\alpha<0$ or $\beta<0$ (or $\gamma<0$) by dropping $I_{3}$ or $I_{2}$ (or $I_{4}$), respectively, we can always get \eqref{final_est}.
\end{proof}

\section{Some applications to nonlinear equations for the heat $p$-sub-Laplacian}
\label{SEC:appl} In this section, we give some applications of Theorem \ref{compar_thm} to nonlinear equations for the heat $p$-sub-Laplacian on stratified Lie groups. These groups are an important class of graded Lie groups, investigated thoroughly by Folland \cite{Fol75}. There are many different, equivalent ways to define a stratified Lie group (see, for example, \cite{BLU07, FS-book} or \cite{FR16, RS-book} for the Lie group and Lie algebra points of view, respectively). A Lie group $\mathbb{G}=\left(\mathbb{R}^{N}, \circ\right)$ is called a stratified Lie group if it satisfies the following two conditions:
\begin{itemize}
\item for every $\lambda>0$ the dilation $\delta_{\lambda}: \mathbb{R}^{N} \rightarrow \mathbb{R}^{N}$ defined by
$$
\delta_{\lambda}(x) \equiv \delta_{\lambda}\left(x^{'}, \ldots, x^{(r)}\right):=\left(\lambda x^{'}, \ldots, \lambda^{r} x^{(r)}\right)
$$
is an automorphism of the group $\mathbb{G}$, where $x^{\prime} \equiv x^{(1)} \in \mathbb{R}^{N_{1}}$ and $x^{(k)} \in \mathbb{R}^{N_{k}}$ for $k=2, \ldots, r$ with $N_{1}+\cdots+N_{r}=N$ and $\mathbb{R}^{N}=\mathbb{R}^{N_{1}} \times \cdots \times \mathbb{R}^{N_{r}}$.
\item let $X_{1}, \ldots, X_{N_{1}}$ be the left invariant vector fields on $\mathbb{G}$ such that $X_{j}(0)=\frac{\partial}{\partial x_{j}}|_{0}$ for $j=1, \ldots, N_{1}$. Then, for every $x \in \mathbb{R}^{N}$ the H\"{o}rmander condition
$$
\operatorname{rank}\left(\operatorname{Lie}\left\{X_{1}, \ldots, X_{N_{1}}\right\}\right)=N
$$
holds, that is, $X_{1}, \ldots, X_{N_{1}}$ with their iterated commutators span the whole Lie algebra of the group $\G$.
\end{itemize}
Let us also recall that the left invariant vector field $X_{j}$ has an explicit form given by (see, e.g. \cite[Section 3.1.5]{FR16})
\begin{equation}
    \label{Xj}
X_{j}=\frac{\partial}{\partial x_{j}^{\prime}}+\sum_{l=2}^{r} \sum_{m=1}^{N_{l}} a_{j, m}^{(l)}\left(x^{\prime}, \ldots, x^{(l-1)}\right) \frac{\partial}{\partial x_{m}^{(l)}}.
\end{equation}
Throughout this section, we will also use the following notations
$$
\nabla_{H}:=\left(X_{1}, \ldots, X_{N_{1}}\right)
$$
for the horizontal gradient,
\begin{equation}
    \label{psublap}
\mathcal{L}_{p} f:=\nabla_{H}\left(\left|\nabla_{H} f\right|^{p-2} \nabla_{H} f\right), \quad 1<p<\infty,
\end{equation}
for the $p$-sub-Laplacian, and
$$
\left|x^{\prime}\right|=\sqrt{x_{1}^{\prime 2}+\cdots+x_{N_{1}}^{\prime 2}}
$$
for the Euclidean norm on $\mathbb{R}^{N_{1}}$.

Using \eqref{Xj} one can observe that (see, e.g. \cite{RS17})
\begin{equation}
    \label{3.3}
\left.\left.\left|\nabla_{H}\right| x^{\prime}\right|^{b}|=b| x^{\prime}\right|^{b-1},
\end{equation}
and
\begin{equation}
    \label{3.4}
\nabla_{H}\left(\frac{x^{\prime}}{\left|x^{\prime}\right|^{b}}\right)=\frac{N_{1}-b}{\left|x^{\prime}\right|^{b}}
\end{equation}
for all $b \in \mathbb{R}, x^{\prime} \in \mathbb{R}^{N_{1}}$ and $\left|x^{\prime}\right| \neq 0$.

Let us first consider the following initial boundary value problem for the $p$-sub-Laplacian, $1<p<\infty$,
\begin{equation}
    \label{psublap_pr_111}
\left\{\begin{array}{l}
{u_{t}-\mathcal{L}_{p} u=\alpha|u|^{q-1} u+\beta|\nabla_{H}u|^{r}, \quad x \in \Omega, \quad t>0,} \\
{u(t, x)=0, \quad x \in \partial \Omega, \quad t>0,} \\
{u(0, x)=u_{0}(x), \quad x \in \Omega,}
\end{array}\right.
\end{equation}
where $u_{0}(x) \geqslant 0$, $u_{0}(x) \not \equiv 0$, $u_{0} \in S_{0}^{1, \infty}(\Omega)$, and the parameters $\alpha$, $\beta$, $q$ and $r$ will be determined later. By Definition \ref{def_weak} let us recall that $T_{\max }$ is the
maximal existence time of a weak solution of \eqref{psublap_pr_111}.
\begin{thm}\label{glob_exis_3.5_111}  Let $\Omega \subset \G$ be a bounded open set in a stratified Lie group with $N_{1}$ being the dimension of the first stratum. Assume that $\alpha$, $\beta$, $p,q$ and $r$ in \eqref{psublap_pr_111} satisfy one of the following conditions:
\begin{enumerate}[label=(\roman*)]
    \item $\alpha<0$, $\beta>0$, and $p,r>1$ with $p\leq r+1$ and $p/2\leq r<q$;
    \item $\alpha>0$, $\beta<0$, and $p>1$, $q\geq 1$ with $p<r+1$ and $q\leq r$.
    \end{enumerate}
Then a weak solution of \eqref{psublap_pr_111} is globally in $t$-bounded, that is, there exists a constant $M$ depending only on $p$, $q$, $r$, $\alpha$, $\beta$, $N_{1}$, $\Omega$ and $u_{0}$ such that for every $T>0$ we have $0\leq u\leq M$ on $(0,T)$.
\end{thm}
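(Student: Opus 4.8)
The plan is to squeeze $u$ between two explicit comparison functions and apply Theorem~\ref{compar_thm}. Note that Problem \eqref{psublap_pr_111} is the case $n_1=n_2=1$, $\gamma=0$ of \eqref{eq_1}--\eqref{eq_3}, with $\mathcal{R}_1^{a_1/\nu_1}=\mathcal{R}_2^{a_2/\nu_2}=\nabla_{H}$ and $a=1$, and that in both cases (i) and (ii) the hypotheses of Theorem~\ref{compar_thm} are met: one of $\alpha,\beta$ is positive ($\beta>0$ in (i), $\alpha>0$ in (ii)), and $r\ge p/2$ whenever $\beta>0$, which occurs only in case (i). For the lower bound, the constant $w\equiv 0$ is a (classical, hence weak) subsolution, since every term of the equation vanishes on it and $w\le u_0$, $w|_{\partial\Omega}\le 0$; Theorem~\ref{compar_thm}, applied with $w$ as subsolution and the solution $u$ as supersolution, then gives $0\le u$ on $Q_T$ for every $T<T_{\max}$.

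For the upper bound it suffices to exhibit a bounded, time-independent supersolution $v=v(x)$ with $v\ge\|u_0\|_{L^{\infty}(\Omega)}$ on $\overline\Omega$: then Theorem~\ref{compar_thm}, applied with $u$ as subsolution and $v$ as supersolution, gives $u\le v\le M:=\max_{\overline\Omega}v$ on $Q_T$, uniformly in $T<T_{\max}$. In case (i), since $\alpha<0$, the constant $v\equiv\|u_0\|_{L^{\infty}(\Omega)}$ already works: here $\mathcal{L}_pv=0$ and $\nabla_{H}v=0$, so the defining inequality becomes $0\ge\alpha v^q$, which holds. The remaining hypotheses $p\le r+1$ and $r<q$ are needed only to make the power-type barrier of case (ii) available here as well; they give $p-1<q$ and $r<q$, so that the positive term $|\alpha|v^q$ dominates both $|\mathcal{L}_pv|$ and $\beta|\nabla_{H}v|^{r}$.

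In case (ii) ($\alpha>0$, $\beta<0$) a genuinely non-constant barrier is needed. Recalling that $\mathcal{L}_p$ and $\nabla_{H}$ are left-invariant and that on a stratified group the first-layer coordinate satisfies $(g\circ x)'=g'+x'$, we may left-translate $\Omega$ so that $\overline\Omega\subset\{R_0\le x_1'\le R_1\}$ for some $0<R_0<R_1<\infty$ (this changes neither the equation nor $\|u_0\|_{L^{\infty}}$). We then take $v(x)=A\,(x_1')^{-\gamma}$ with $A,\gamma>0$ to be fixed. By \eqref{Xj}, $X_jx_1'=\delta_{1j}$, so $\nabla_{H}x_1'$ is the constant unit vector $(1,0,\dots,0)$ and $\nabla_{H}\cdot\nabla_{H}x_1'=0$; hence $|\nabla_{H}v|=A\gamma(x_1')^{-\gamma-1}$ and a direct computation gives $\mathcal{L}_pv=(p-1)(\gamma+1)(A\gamma)^{p-1}(x_1')^{-\gamma(p-1)-p}>0$. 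On $\overline\Omega$ every power of $x_1'$ is bounded above and below by positive constants depending on $R_0,R_1$, so
\[
-\mathcal{L}_pv-\alpha v^q-\beta|\nabla_{H}v|^{r}\ \ge\ -c_1A^{p-1}+\big(|\beta|\,|\nabla_{H}v|^{r}-\alpha v^q\big)\qquad \text{on }\Omega,
\]
with $c_1>0$ depending on $R_0,R_1,\gamma,p$. Since $p<r+1$ yields $p-1<r$, the $A^{p-1}$ term is of lower order. If $q<r$, then $|\beta|\,|\nabla_{H}v|^{r}$ (of order $A^r$) dominates $\alpha v^q$ (of order $A^q$), and the right-hand side is $\ge 0$ for $A$ large. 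In the borderline case $q=r$ one keeps the two terms together: $|\beta|\,|\nabla_{H}v|^{r}-\alpha v^q=A^{r}(x_1')^{-r\gamma}\big(|\beta|\gamma^{r}(x_1')^{-r}-\alpha\big)$, and since $x_1'\le R_1$ the choice $\gamma>R_1(\alpha/|\beta|)^{1/r}$ makes the bracket bounded below by a positive constant, so the right-hand side is $\ge\delta_0A^{r}-c_1A^{p-1}\ge 0$ for $A$ large. Finally, enlarging $A$ once more so that $A(x_1')^{-\gamma}\ge\|u_0\|_{L^{\infty}(\Omega)}$ on $\overline\Omega$, the function $v$ is smooth, bounded on $\overline\Omega$ and $\ge 0$ on $\partial\Omega$, hence a stationary supersolution in the class to which Theorem~\ref{compar_thm} applies, and we obtain $0\le u\le v\le AR_0^{-\gamma}=:M$.

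In either case the bound $0\le u\le M$ is uniform in $T<T_{\max}$, with $M$ depending only on $p,q,r,\alpha,\beta,N_1,\Omega$ and $\|u_0\|_{L^{\infty}}$; since this uniform a priori bound precludes blow-up, $T_{\max}=\infty$ and the assertion follows. I expect the main obstacle to be the coefficient bookkeeping in the borderline regime $q=r$ of case (ii) — precisely where $\gamma$ must be taken large enough in terms of $\alpha/|\beta|$ and the size of $\Omega$ — rather than the (routine) check that the barrier $v$ lies in the function class required by Theorem~\ref{compar_thm}.
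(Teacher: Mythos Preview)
Your argument is correct and follows the same comparison-principle strategy as the paper, but with noticeably simpler barriers. In case~(i) the paper builds an exponential super-solution $V_{1}=K_{1}e^{\sigma_{1}|x'-x_{0}'|}$ and carefully tunes $K_{1},\sigma_{1}$ so that the $v^{q}$ term (with coefficient $|\alpha|$) controls both $\mathcal{L}_{p}V_{1}$ and $\beta|\nabla_{H}V_{1}|^{r}$; your observation that a constant $v\equiv\|u_{0}\|_{L^{\infty}}$ already satisfies $0\ge\alpha v^{q}$ when $\alpha<0$ short-circuits all of this and in fact shows that, for the boundedness conclusion alone, neither $p\le r+1$ nor $r<q$ is needed in case~(i) (only $r\ge p/2$ is, to invoke Theorem~\ref{compar_thm}). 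In case~(ii) the paper uses a radial power $V_{2}=\tfrac{K_{2}}{\sigma_{2}}|x'-x_{0}'|^{\sigma_{2}}$ with the specific exponent $\sigma_{2}=p/(p-1)$ (which makes $\mathcal{L}_{p}V_{2}$ constant), treating $r>q$ and $r=q$ with separate barriers; your one-variable choice $v=A(x_{1}')^{-\gamma}$ exploits that $X_{j}x_{1}'=\delta_{1j}$ and handles both subcases with the same function by adjusting $\gamma$ in the borderline $r=q$. Either family of barriers works; yours is slightly lighter computationally (no $N_{1}-1$ curvature term from $|x'|$), while the paper's radial choice is perhaps more canonical. The coefficient check you flag at $q=r$ is indeed the only delicate step, and your inequality $\gamma>R_{1}(\alpha/|\beta|)^{1/r}$ is exactly what is required.
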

\begin{proof}[Proof of Theorem \ref{glob_exis_3.5_111}] Part (i). For convenience, we assume that $\beta=-\alpha=1$. Set $R^{\prime}:=\underset{x=\left(x^{\prime}, x^{\prime \prime}\right) \in \Omega}{\rm max}\left|x^{\prime}\right|$. Then, since $\Omega$ is bounded, we get $R^{\prime}<\infty$. For any $x=\left(x^{\prime}, x^{\prime \prime}\right) \in \Omega$, let $x_{0}=\left(x_{0}^{\prime}, x_{0}^{\prime \prime}\right) \in \G \backslash \Omega$ and $\varepsilon \in(0,1)$ be such that $\varepsilon \leqslant\left|x_{0}^{\prime}-x^{\prime}\right|<R^{\prime}+1$. We also introduce the following notations
$$
V_{1}(t, x):=K_{1} e^{\sigma_{1} R}, \quad R=\left|x^{\prime}-x_{0}^{\prime}\right|, \quad x=\left(x^{\prime}, x^{\prime \prime}\right) \in \Omega,
$$
and $$
\mathcal{M}_{p} w:=w_{t}-\mathcal{L}_{p} w+w^{q}-|\nabla_{H} w|^{r}.
$$
Let us now find suitable positive $K_{1}$ and $\sigma_{1}$ such that $V_{1}(t, x)$ is a super-solution of \eqref{psublap_pr_111}. By using the identities \eqref{3.3} and \eqref{3.4}, we observe that
$$|\nabla_{H} V_{1}|^{r}=K_{1}^{r}e^{\sigma_{1} Rr}\sigma_{1}^{r}$$
and
\begin{equation*}
    \begin{split}
\mathcal{L}_{p} V_{1}&=\nabla_{H}\left(\left|\nabla_{H} K_{1} e^{\sigma_{1} R}\right|^{p-2} \nabla_{H} K_{1} e^{\sigma_{1} R}\right)\\&=\nabla_{H}\left(K_{1}^{p-2} \sigma_{1}^{p-2} e^{\sigma_{1}\left|x^{\prime}-x_{0}^{\prime}\right|(p-2)} K_{1} \sigma_{1} e^{\sigma_{1}\left|x^{\prime}-x_{0}^{\prime}\right|} \frac{x^{\prime}-x_{0}^{\prime}}{\left|x^{\prime}-x_{0}^{\prime}\right|}\right)\\&=
\nabla_{H}\left(K_{1}^{p-1} \sigma_{1}^{p-1} e^{\sigma_{1}\left|x^{\prime}-x_{0}^{\prime}\right|(p-1)} \frac{x^{\prime}-x_{0}^{\prime}}{\left|x^{\prime}-x_{0}^{\prime}\right|}\right)\\&=K_{1}^{p-1} \sigma_{1}^{p-1} \sigma_{1}(p-1) e^{\sigma_{1}\left|x^{\prime}-x_{0}^{\prime}\right|(p-1)}+K_{1}^{p-1} \sigma_{1}^{p-1} e^{\sigma_{1}\left|x^{\prime}-x_{0}^{\prime}\right|(p-1)} \frac{N_{1}-1}{\left|x^{\prime}-x_{0}^{\prime}\right|}\\&=(p-1) \sigma_{1}^{p} K_{1}^{p-1} e^{(p-1) \sigma_{1} R}+\frac{N_{1}-1}{R} \sigma_{1}^{p-1} K_{1}^{p-1} e^{\sigma_{1} R(p-1)}.
            \end{split}
\end{equation*}
Thus, we have
$$\mathcal{M}_{p} V_{1}=-(p-1) \sigma_{1}^{p} K_{1}^{p-1} e^{(p-1) \sigma_{1} R}-\frac{N_{1}-1}{R} \sigma_{1}^{p-1} K_{1}^{p-1} e^{(p-1) \sigma_{1} R}+K_{1}^{q} e^{q \sigma_{1} R}-K_{1}^{r} e^{r \sigma_{1} R}\sigma_{1}^{r}.$$
Now, we need to find $\sigma_{1}$ and $K_{1}$ such that $\mathcal{M}_{p} V_{1} \geqslant 0$, that is,
$$
(p-1) \sigma_{1}^{p} K_{1}^{p-1} e^{(p-1) \sigma_{1} R}+\frac{N_{1}-1}{R} \sigma_{1}^{p-1} K_{1}^{p-1} e^{(p-1) \sigma_{1} R}+K_{1}^{r} e^{r \sigma_{1} R}\sigma_{1}^{r} \leqslant K_{1}^{q} e^{q \sigma_{1} R}.
$$
Multiplying both sides of the inequality by $K_{1}^{-p+1} e^{-(p-1) \sigma_{1} R}$, we derive that
\begin{equation}
    \label{3.7}
(p-1) \sigma_{1}^{p}+\frac{N_{1}-1}{R} \sigma_{1}^{p-1}+K_{1}^{r-p+1} e^{(r-p+1) \sigma_{1} R}\sigma_{1}^{r} \leqslant K_{1}^{q+1-p} e^{(q+1-p) \sigma_{1} R}.
\end{equation}
Taking into account $\varepsilon \leqslant R<R^{\prime}+1$, we see that in order to prove \eqref{3.7} it is sufficient to show
$$
(p-1) \sigma_{1}^{p}+\frac{N_{1}-1}{\varepsilon} \sigma_{1}^{p-1}+K_{1}^{r-p+1} e^{(r-p+1) \sigma_{1} (R'+1)}\sigma_{1}^{r} \leqslant K_{1}^{q+1-p}.
$$
Thus, to have $\mathcal{M}_{p} V_{1} \geqslant 0$ we can choose
$$
\begin{array}{c}
{\sigma_{1}=\frac{1}{(r-p+1)\left(R^{\prime}+1\right)}}, \\
{K_{1}=\max \left\{(2 e\sigma_{1}^{r})^{1 /(q-r)},\left(2\left((p-1) \sigma_{1}^{p}+\frac{N_{1}-1}{\varepsilon} \sigma_{1}^{p-1}\right)\right)^{1 /(q+1-p)}\right\}}
\end{array}
$$
when $r+1>p$, and
$$
\sigma_{1}=1, \quad K_{1}=\max \left\{2^{1 /(q-r)},\left(2\left(p-1+\frac{N_{1}-1}{\varepsilon}\right)\right)^{1 /(q+1-p)}\right\}
$$
when $r+1=p$. We also need that $K_{1} \geqslant\left\|u_{0}\right\|_{L^{\infty}(\Omega)}$ such that $V_{1}(0, x)=K_{1} e^{\sigma_{1} R} \geqslant u_{0}(x)$. Obviously, we also have $V_{1}(t,x) \geq 0=u(t,x)$ on $\partial \Omega$. Therefore, $V_{1}(t, x)$ is a super-solution of \eqref{psublap_pr_111}. Then, Theorem \ref{compar_thm} concludes that
\begin{equation}
    \label{3.10_11}
0\leq u(t, x) \leqslant K_{1} e^{\sigma_{1}\left(R^{\prime}+1\right)}<\infty, \quad R^{\prime}=\max _{x=\left(x^{\prime}, x^{\prime \prime}\right) \in \Omega}\left|x^{\prime}\right|.
\end{equation}
Note that the right-hand side of \eqref{3.10_11} is independent of $t$, hence $u(t, x)$ is globally in $t$-bounded.

Part (ii). In this case, we may assume that $\alpha=-\beta=1$. We recall from Part (i) that $R^{\prime}=\underset{x=\left(x^{\prime}, x^{\prime \prime}\right) \in \Omega}{\rm max}\left|x^{\prime}\right|<\infty$ and $\varepsilon \leqslant\left|x_{0}^{\prime}-x^{\prime}\right|<R^{\prime}+1$ for any $x=\left(x^{\prime}, x^{\prime \prime}\right) \in \Omega$, where $x_{0}=\left(x_{0}^{\prime}, x_{0}^{\prime \prime}\right) \in \G \backslash \Omega$ and $\varepsilon \in(0,1)$.

First, let us consider the case $r>q$. Here, we will use the following notations $$V_{2}(t,x):=\frac{K_{2}}{\sigma_{2}} R^{\sigma_{2}}, \quad \sigma_{2}=\frac{p}{p-1}, \quad R=\left|x'-x'_{0}\right|, \quad x \in \Omega,$$
and $$
\mathcal{N}_{p} w:=w_{t}-\mathcal{L}_{p} w-w^{q}+|\nabla_{H} w|^{r}.
$$
Now, we need to find a suitable positive $K_{2}$ such that $V_{2}(t, x)$ is a super-solution of \eqref{psublap_pr_111}. By using the identities \eqref{3.3} and \eqref{3.4}, we observe that
\begin{equation*}
    \begin{split}
\mathcal{L}_{p} V_{2}&=\nabla_{H}\left(\left|\nabla_{H} \left(\frac{K_{2}}{\sigma_{2}} R^{\sigma_{2}}\right)\right|^{p-2} \nabla_{H} \left(\frac{K_{2}}{\sigma_{2}} R^{\sigma_{2}}\right)\right)\\&=\left(\frac{K_{2}}{\sigma_{2}}\right)^{p-1}\nabla_{H}\left(\sigma_{2}^{p-2} R^{(\sigma_{2}-1)(p-2)} \sigma_{2} R^{\sigma_{2}-1}\frac{x^{\prime}-x_{0}^{\prime}}{\left|x^{\prime}-x_{0}^{\prime}\right|}\right)\\&=
K_{2}^{p-1}\nabla_{H}\left( R^{(\sigma_{2}-1)(p-1)} \frac{x^{\prime}-x_{0}^{\prime}}{\left|x^{\prime}-x_{0}^{\prime}\right|}\right)\\&=N_{1}K_{2}^{p-1}.
            \end{split}
\end{equation*} Then we have
$$
\mathcal{N}_{p} V_{2}=-N_{1} K_{2}^{p-1}+K_{2}^{r} R^{\frac{r}{p-1}}-\left(\frac{K_{2}}{\sigma_{2}}\right)^{q} R^{\frac{q p}{p-1}}.
$$
From this, we have
$$
\mathcal{N}_{p} V_{2} \geq 0 \Longleftrightarrow K_{2}^{r} R^{\frac{r}{p-1}} \geq N_{1} K_{2}^{p-1}+\left(\frac{K_{2}}{\sigma_{2}}\right)^{q} R^{\frac{qp}{p-1}}.
$$
Thus, it is sufficient to choose $K_{2}$ such that
\begin{equation}
    \label{3.20_11}
    K_{2}^{r} R^{\frac{r}{p-1}} \geq 2 N_{1} K_{2}^{p-1},
\end{equation}
\begin{equation}
    \label{3.21_11}
K_{2}^{r} R ^{\frac{r}{p-1}}  \geq 2\left(\frac{K_{2}}{\sigma_{2}}\right)^{q} R^{\frac{qp}{p-1}}.
\end{equation}
Note that the inequality \eqref{3.20_11} is satisfied if we take
$$
K_{2} \geq\left(\frac{2 N_{1}}{\varepsilon^{\frac{r}{p-1}}}\right)^{\frac{1}{r-p+1}},
$$
provided that $r>p-1$. We divide inequality \eqref{3.21_11} by $K_{2}^{q} R^{ \frac{r}{p-1}}$ to derive
$$
K_{2}^{r-q} \geq \frac{2}{\sigma_{2}^{q}} R^{\frac{q p-r}{p-1}}.
$$
For $q p \geq r$, we can set
$$
K_{2} \geq\left(\frac{2}{\sigma_{2}^{q}}\right)^{\frac{1}{r-q}}(R'+1)^{\frac{q p-r}{(p-1)(r-q)}},
$$
while for $q p<r$, we can set
$$
K_{2} \geq\left(\frac{2}{\sigma_{2}^{q}}\right)^{\frac{1}{r-q}} \varepsilon^{\frac{qp-r}{(p-1)(r-q)}}.
$$
We also need that $K_{2} \geq \frac{\sigma_{2} \|u_{0}\|_{L^{\infty}}}{\varepsilon^{\sigma_{2}}}$ to have $V_{2}(0, x) \geq u_{0}$. Thus, taking $K_{2}$ as follows
\begin{equation*}
K_{2} \geq \max \left\{\frac{\sigma_{2}\left\|u_{0}\right\|_{L^ \infty}}{\varepsilon^{\sigma_{2}}},\left(\frac{2 N_{1}}{\varepsilon^{\frac{r}{p-1}}}\right)^{\frac{1}{r-p+1}},\left(\frac{2}{\sigma_{2}^{q}}\right)^{\frac{1}{r-q}}(R'+1)^{\frac{qp-r}{(p-1)(r-q)}},
\left(\frac{2}{\sigma_{2}^{q}}\right)^{\frac{1}{r-q}} \varepsilon^{\frac{q p-r}{(p-1)(r-q)}}\right\},
\end{equation*}
we obtain $\mathcal{N}_{p}V_{2}\geq 0$ and $V_{2}(0, x) \geq u_{0}$. It is clear that $V_{2}(t,x) \geq 0=u(t,x)$ on $\partial \Omega$. Therefore, $V_{2}(t,x)$ is a super-solution of \eqref{psublap_pr_111}. Then, Theorem \ref{compar_thm} concludes that
$$
0\leq u(t,x) \leq \frac{K_{2}(R'+1)^{\frac{p}{p-1}}}{\sigma_{2}}<\infty.
$$
In the case when $r=q$, we can take
$$\sigma_{3} \geq \max \left\{1,2^{1 / r}(R'+1)\right\},$$
and
$$
K_{3} \geq \max \left\{\varepsilon^{-\sigma_{3}}\left\|u_{0}\right\|_{L^ \infty},\left(\frac{2((p-1)(\sigma_{3}-1)+N_{1}-1)}{\varepsilon^{(r-p+1)(\sigma_{3}-1)+1}}\right)^{\frac{1}{r-p+1}}\right\},
$$
such that the function $V_{3}(t, x)=K_{3} R^{\sigma_{3}}$ is a super-solution of \eqref{psublap_pr_111}. By the same procedure, one can obtain the uniform boundedness of $u(t, x)$.
\end{proof}

\begin{thm}\label{thm_1.3} Let $\alpha>0$, $\beta<0$, $p>1$ and $r>0$. If $q>\max \{p-1, r, 1\}$, then the solution of the problem \eqref{psublap_pr_111} blows up in finite time for some large $u_{0}>0$.
\end{thm}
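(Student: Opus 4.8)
The strategy is to produce, for a suitably chosen large $u_{0}$, a weak sub-solution $\underline{u}$ of \eqref{psublap_pr_111} on $Q_{T^{*}}$, for some finite $T^{*}>0$, with $\underline{u}(0,\cdot)\le u_{0}$, $\underline{u}|_{\partial\Omega}=0$, and $\|\underline{u}(t,\cdot)\|_{L^{\infty}(\Omega)}\to\infty$ as $t\uparrow T^{*}$. Since $\alpha>0$ (and, as $\beta<0$, the condition ``$r_{j}\ge p_{j}/2$'' in Theorem \ref{compar_thm} is vacuous), Theorem \ref{compar_thm} applied to the pair $(\underline{u},u)$ on $(0,T')\times\Omega$ for every $T'<\min\{T_{\max},T^{*}\}$ gives $u\ge\underline{u}$ there; here we use that the weak solution $u$ is nonnegative and is in particular a super-solution of \eqref{psublap_pr_111}. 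If $T_{\max}>T^{*}$, then $u\in C([0,T_{\max})\times\overline{\Omega})$ is bounded on the compact set $[0,T^{*}]\times\overline{\Omega}$, contradicting $u(t,\cdot)\ge\underline{u}(t,\cdot)$ with $\|\underline{u}(t,\cdot)\|_{L^{\infty}}\to\infty$. Hence $T_{\max}\le T^{*}<\infty$, which is the asserted finite-time blow-up.

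For the sub-solution I would use the self-similar ansatz
\[
\underline{u}(t,x)=(T^{*}-t)^{-\frac{1}{q-1}}\,W\!\left(\frac{|x'-x_{0}'|}{(T^{*}-t)^{\gamma}}\right),
\]
where $x_{0}=(x_{0}',x_{0}'')\in\Omega$ is fixed so that the ball $\{|x'-x_{0}'|<\rho_{0}\}$ lies in $\Omega$, $\gamma>0$ is a scaling exponent chosen below, and $W:[0,\infty)\to[0,\infty)$ is a fixed smooth nonincreasing profile equal to a large constant $A$ on $[0,1]$, supported in $[0,2]$, and vanishing at $\xi=2$ to a suitable order $k$. On functions of $x'$ alone one has $\nabla_{H}=\nabla_{x'}$ by \eqref{Xj}, so by the radial identities \eqref{3.3}--\eqref{3.4} the operator $\mathcal{L}_{p}$ acts on $\underline{u}$ exactly like a one-dimensional radial $p$-Laplacian in $R:=|x'-x_{0}'|$; a direct computation then shows that each of $\underline{u}_{t}$, $\mathcal{L}_{p}\underline{u}$, $\alpha\underline{u}^{q}$, $|\beta||\nabla_{H}\underline{u}|^{r}$ equals a fixed negative power of $(T^{*}-t)$ times a function of the self-similar variable $\xi=R/(T^{*}-t)^{\gamma}$. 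Balancing $\underline{u}_{t}$ against $\alpha\underline{u}^{q}$ forces the amplitude exponent $1/(q-1)$ (this is where $q>1$ enters), and choosing
\[
0<\gamma<\min\left\{\frac{q-p+1}{p(q-1)},\ \frac{q-r}{r(q-1)}\right\},
\]
which is possible precisely because $q>p-1$ and $q>r$, makes the $\mathcal{L}_{p}\underline{u}$ and $|\nabla_{H}\underline{u}|^{r}$ terms subordinate, i.e.\ carrying a strictly weaker singularity in $(T^{*}-t)$ than the $\underline{u}_{t}$ and $\underline{u}^{q}$ terms.

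With these normalisations, dividing the sub-solution inequality $\underline{u}_{t}-\mathcal{L}_{p}\underline{u}-\alpha\underline{u}^{q}+|\beta||\nabla_{H}\underline{u}|^{r}\le 0$ by the common leading power $(T^{*}-t)^{-q/(q-1)}$ reduces it to a profile inequality of the schematic form
\[
\tfrac{1}{q-1}\,W+\gamma\,\xi\,\partial_{\xi}W-\alpha W^{q}+(T^{*}-t)^{c_{1}}\big({-\mathcal{L}^{\mathrm{rad}}_{p}W}\big)+|\beta|(T^{*}-t)^{c_{2}}|\partial_{\xi}W|^{r}\le 0,\qquad c_{1},c_{2}>0 .
\]
On the core $\{\xi\le 1\}$ this reads $\tfrac{1}{q-1}A-\alpha A^{q}+\cdots\le 0$, which holds once $A$ is large, uniformly in $T^{*}$. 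The genuine obstacle is the ``front'' region $\{1<\xi<2\}$, where $W$ is small and $\alpha W^{q}\ll\tfrac{1}{q-1}W$, so the source alone cannot compensate $\underline{u}_{t}$: there one exploits that the self-similar drift $\gamma\,\xi\,\partial_{\xi}W$ is strictly negative (the profile advances inward) and that, because $W$ vanishes to order $k>1$, the diffusion contribution $-\mathcal{L}^{\mathrm{rad}}_{p}W$ is favourably signed near $\xi=2$; the residual $O((T^{*}-t)^{c_{1}})+O((T^{*}-t)^{c_{2}})$ is then absorbed by choosing $T^{*}$ small. I expect this last step to be the main technical point: one must verify the profile inequality uniformly over $\{1<\xi<2\}$ and all $t\in(0,T^{*})$ by comparing the $\xi$-powers generated by $\partial_{\xi}W$, $\mathcal{L}^{\mathrm{rad}}_{p}W$ and $|\partial_{\xi}W|^{r}$ near $\xi=2$, which (through a short case analysis in $p$ and $r$, again using $q>\max\{p-1,r,1\}$) pins down the admissible order $k$ and the smallness of $T^{*}$. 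Finally $\underline{u}(t,\cdot)$ is supported in $\{|x'-x_{0}'|\le 2(T^{*}-t)^{\gamma}\}\subset\{|x'-x_{0}'|<\rho_{0}\}\subset\Omega$ once $T^{*}$ is small, so $\underline{u}|_{\partial\Omega}=0$; and since $\underline{u}(0,\cdot)\le (T^{*})^{-1/(q-1)}A=:M_{0}$, any admissible $u_{0}$ with $u_{0}\ge M_{0}$ on $\{|x'-x_{0}'|\le\rho_{0}\}$ works, providing the ``large $u_{0}$'' in the statement.
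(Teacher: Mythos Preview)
Your strategy is the same as the paper's: construct a self-similar blowing-up sub-solution and invoke Theorem~\ref{compar_thm}. You correctly identify the amplitude exponent $1/(q-1)$ and the admissible range for the spatial scaling exponent $\gamma$ (the paper's $k_{2}$), and you handle the boundary condition and the ``large $u_{0}$'' requirement in the same way. So the architecture of the argument is right.

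The one substantive difference is the choice of profile. You take a generic smooth cutoff $W$, constant on $[0,1]$ and vanishing to some order $k$ at $\xi=2$, and defer the verification in the front zone $\{1<\xi<2\}$ to an unspecified case analysis. The paper instead uses the explicit profile
\[
F(y)=1+\frac{A}{\sigma}-\frac{y^{\sigma}}{\sigma A^{\sigma-1}},\qquad \sigma=\frac{p}{p-1},
\]
which is designed so that the radial $p$-Laplacian acting on it is \emph{constant}: $(|F'|^{p-2}F')'+\frac{N_{1}-1}{y}|F'|^{p-2}F'=-N_{1}/A$. This eliminates precisely the difficulty you flag. With this $F$ the analysis splits cleanly into the inner zone $0\le y\le A$ (where $1\le F\le 1+A/\sigma$ and $|F'|\le 1$, so the source $F^{q}$ dominates after choosing $\delta<\bigl(k_{1}(1+A/\sigma)\bigr)^{-1}$) and the outer zone $A\le y\le R_{1}$ (where $0\le F\le 1$ and $|F'|\ge 1$, so the drift $k_{2}yF'$ dominates $k_{1}F$ once $A>k_{1}/k_{2}$). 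The subordinate $\mathcal{L}_{p}$ and gradient terms are then absorbed by taking the starting time $t_{0}$ close to $1/\delta$, which plays the role of your ``$T^{*}$ small''. Note also that the paper's $F$ is not compactly supported but becomes negative for $y>R_{1}$, which is how the boundary inequality $v\le 0$ on $\partial\Omega$ is obtained; your compactly supported $W$ achieves the same end differently.

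In short: your plan is sound, but the ``main technical point'' you anticipate is exactly what the paper sidesteps by choosing the profile adapted to $\mathcal{L}_{p}$. If you carry out your version, you will need to engineer $W$ so that on the set where $\alpha W^{q}$ no longer controls $\frac{1}{q-1}W$, the logarithmic derivative $-W'/W$ is bounded below by $\frac{1}{2\gamma(q-1)}$; a flat-then-cutoff profile does not do this automatically near the edge of the plateau.
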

\begin{proof}[Proof of Theorem \ref{thm_1.3}]
For convenience, let us assume that $\alpha=-\beta=1$. Set
\begin{equation}\label{v_func}
v(t,|x'|):=\frac{1}{(1-\delta t)^{k_{1}}} F\left(\frac{|x'|}{(1-\delta t)^{k_{2}}}\right), \quad t_{0} \leq t<\frac{1}{\delta},
\end{equation}
where
$$
F(y):=1+\frac{A}{\sigma}-\frac{y^{\sigma}}{\sigma A^{\sigma-1}}, \quad y \geq 0, \quad \sigma=\frac{p}{p-1},
$$
and
\begin{equation}\label{range_par}
k_{1}=\frac{1}{q-1}, \quad 0<k_{2}<\min \left\{\frac{q-p+1}{p(q-1)}, \frac{q-r}{r(q-1)}\right\}, \quad A>\frac{k_{1}}{k_{2}}, \quad \delta<\frac{1}{k_{1}\left(1+\frac{A}{\sigma}\right)}.
\end{equation}
Then, it can be noted that $v(t,|x'|)$ is positive and smooth when $t\in [t_{0},\frac{1}{\delta})$ and $|x'|<R_{1}(1-\delta t)^{k_{2}}$, where $R_{1}:=\left(A^{\sigma-1}(A+\sigma)\right)^{1 / \sigma}$.

We want to show that $v(t, |x'|)$ is a sub-solution of \eqref{psublap_pr_111}. For $y=\frac{|x'|}{(1-\delta t)^{k_{2}}}$, by a direct calculation we have
$$
\begin{aligned}
\mathcal{N}_{p} v=& v_{t}-\mathcal{L}_{p} v-v^{q}+|\nabla_{H} v|^{r}\\&=\frac{\delta\left(k_{1} F+k_{2} y F^{\prime}\right)}{(1-\delta t)^{k_{1}+1}}-\frac{\left(\left|F^{\prime}\right|^{p-2} F^{\prime}\right)^{\prime}+\frac{N_{1}-1}{y}\left|F^{\prime}\right|^{p-2} F^{\prime}}{(1-\delta t)^{(p-2)(k_{1}+k_{2})+(k_{1}+2 k_{2})}} \\
&-\frac{F^{q}}{(1-\delta t)^{q k_{1}}}+\frac{\left|F^{\prime}\right|^{r}}{(1-\delta t)^{r(k_{1}+k_{2})}}.
\end{aligned}
$$
Note that $k_{1}q=k_{1}+1>(p-2)(k_{1}+k_{2})+k_{1}+2k_{2}$ and $k_{1}+1>(k_{1}+k_{2}) r$ by \eqref{range_par}. Observe that
$$\left(\left|F^{\prime}\right|^{p-2} F^{\prime}\right)^{\prime}+\frac{N_{1}-1}{y}\left|F^{\prime}\right|^{p-2} F^{\prime}=-\frac{N_{1}}{A}, \quad 0<y<R_{1}.$$

Let us now show that $\mathcal{N}_{p} v\leq 0$ for all $t\in [t_{0},\frac{1}{\delta})$ and $0 \leq y \leq R_{1}$. In the case $0 \leq y \leq A$, from the representation of $F(y)$ we note that
$$1 \leq F(y) \leq 1+\frac{A}{\sigma} \quad \text{and} \quad -1 \leq F^{\prime}(y) \leq 0.$$
Then, we can take $t_{0}=t_{0}(p, q, r, \delta, N_{1}, A)$ close to $\frac{1}{\delta}$ such that
\begin{multline}\label{3.5}
    \mathcal{N}_{p} v \leq \frac{1}{(1-\delta t)^{k_{1}+1}}\left(\delta k_{1}\left(1+\frac{A}{\sigma}\right)-1+\frac{N_{1}}{A}\left(1-\delta t_{0}\right)^{1-2 k_{2}-(p-2)(k_{1}+k_{2})}\right.\\\left.+\left(1-\delta t_{0}\right)^{k_{1}+1-r(k_{1}+k_{2})}\right) \leq 0.
\end{multline}
In the case $A \leq y \leq R_{1}$, we have
$$0 \leq F(y) \leq 1 \quad \text{and} \quad -\left(\frac{R_{1}}{A}\right)^{\sigma-1} \leq F^{\prime}(y) \leq-1.$$
Similarly as above, one verifies that
\begin{multline}
    \label{3.6}
\mathcal{N}_{p} v \leq  \frac{1}{(1-\delta t)^{k_{1}+1}}\left(\delta(k_{1}-k_{2} A)+\frac{N_{1}}{A}\left(1-\delta t_{0}\right)^{1-2 k_{2}-(p-2)(k_{1}+k_{2})}\right.\\\left.
+\left(\frac{R_{1}}{A}\right)^{r(\sigma-1)}\left(1-\delta t_{0}\right)^{k_{1}+1-r(k_{1}+k_{2})}\right) \leq 0.
\end{multline}
From \eqref{3.5} and \eqref{3.6}, we conclude that $\mathcal{N}_{p} v \leq 0$ for all $t\in [t_{0},\frac{1}{\delta})$ and $|x'|<R_{1}(1-\delta t)^{k_{2}}$.

Next, we estimate $u_{0}$. By the group translation, without loss of generality we may assume that $\Omega$ contains the unit element of the group $\G$. Then, we can take suitable $t_{0}$ such that $R_{1}(1-\delta t_{0})^{k_{2}}<\underset{x=\left(x^{\prime}, x^{\prime \prime}\right) \in \Omega}{\rm max}\left|x^{\prime}\right|$ and $u_{0} \geq v\left(t_{0}, \cdot \right)$ in $\Omega\cap \{x=(x',x''):|x'|<R_{1}(1-\delta t_{0})^{k_{2}}\}$ for some large $u_{0}>0$. Then, taking into account $v\leq 0$ when $|x'|\geq R_{1}(1-\delta t)^{k_{2}}$, we obtain that $u_{0} \geq v\left(t_{0}, \cdot\right)$ in $\overline{\Omega}$. Obviously, we also have $v \leq 0$ when $(t,x) \in \left(t_{0}, \frac{1}{\delta}\right) \times \partial\Omega.$ Thus, the comparison principle (Theorem \ref{compar_thm}) implies that
$$
u(t,x) \geq v\left(t+t_{0}, x \right), \quad t\in \left[t_{0},\frac{1}{\delta}\right), \quad |x'|<R_{1}(1-\delta t)^{k_{2}}.
$$
On the other hand, by the definition of $v$ we have $\underset{t \rightarrow 1 / \delta}{\rm lim}v(t,0) \rightarrow \infty$. Consequently, $u$ must blow up at a finite time $T \leq \frac{1}{\delta}-t_{0}<\infty$.
\end{proof}
\begin{thm}\label{thm_1.4}
Assume that $\alpha<0$, $\beta>0$, $p,r>1$ and $q>0$ in \eqref{psublap_pr_111} satisfy one of the following conditions:
\begin{itemize}
    \item $r>\max \{p, q\}$;
    \item $r=q>p$, and $\beta \gg|\alpha|$.
    \end{itemize}
There exists $M>0$ such that if $\int_{\Omega} u_{0}^{\frac{2r-p}{r-p}} dx>M$, then $T_{max}<\infty$.
\end{thm}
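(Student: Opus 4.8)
The plan is to establish finite‑time blow‑up by a Kaplan/energy argument applied to the functional
\[
E(t):=\int_\Omega u(t,x)^{\ell}\,dx,\qquad \ell:=\frac{2r-p}{r-p},
\]
the exponent $\ell$ being exactly the one occurring in the hypothesis; note that $\ell>2$ and $\ell-1=\frac{r}{r-p}>1$ because $r>p$. First I would record that $u\ge 0$ on $Q_{T_{\max}}$: the constant $0$ is a weak solution, it lies below $u$ on the parabolic boundary since $u_0\ge 0$, and Theorem \ref{compar_thm} applies (here $\beta>0$ and $r>p>p/2$). Since $u\in C([0,T_{\max})\times\overline\Omega)$ is bounded on $[0,\tau]\times\overline\Omega$ for each $\tau<T_{\max}$ while $\partial_t u\in L^2_{\mathrm{loc}}((0,T_{\max});L^2(\Omega))$, the map $t\mapsto E(t)$ is absolutely continuous; testing the weak formulation of \eqref{psublap_pr_111} with $\phi=u^{\ell-1}$ (admissible after a routine truncation/approximation, as $u^{\ell-1}$ is bounded, vanishes on $\partial\Omega$ and belongs to $L^p_{\mathrm{loc}}((0,T_{\max});S_0^{1,p}(\Omega))$), using $\nabla_H(u^{\ell-1})=(\ell-1)u^{\ell-2}\nabla_H u$ and integrating by parts (legitimate since $u|_{\partial\Omega}=0$), one arrives at
\[
E'(t)=-\ell(\ell-1)\int_\Omega u^{\ell-2}|\nabla_H u|^p\,dx+\alpha\ell\int_\Omega u^{\,\ell-1+q}\,dx+\beta\ell\int_\Omega u^{\ell-1}|\nabla_H u|^r\,dx,
\]
with $\alpha<0<\beta$.

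The key algebraic step is to absorb the unfavourable $p$‑gradient term into the favourable $r$‑gradient term at the cost of a bounded remainder, which works precisely for $\ell=\frac{2r-p}{r-p}$. Writing $u^{\ell-2}|\nabla_H u|^p=\bigl(u^{\ell-1}|\nabla_H u|^r\bigr)^{p/r}\,u^{\,\ell-2-(\ell-1)p/r}$ and applying Young's inequality with conjugate exponents $r/p$ and $r/(r-p)$, the residual power of $u$ is $\dfrac{r(\ell-2)-(\ell-1)p}{r-p}$, which equals $0$ for this value of $\ell$; hence for every $\eps>0$
\[
\int_\Omega u^{\ell-2}|\nabla_H u|^p\,dx\le \eps\int_\Omega u^{\ell-1}|\nabla_H u|^r\,dx+C(\eps)|\Omega|.
\]
Choosing $\eps=\frac{\beta}{2(\ell-1)}$ gives $E'(t)\ge \frac{\beta\ell}{2}\int_\Omega u^{\ell-1}|\nabla_H u|^r\,dx-|\alpha|\ell\int_\Omega u^{\,\ell-1+q}\,dx-C_0$. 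Next, with $m:=1+\frac{\ell-1}{r}>1$ one has $u^{\ell-1}|\nabla_H u|^r=m^{-r}|\nabla_H(u^m)|^r$ and $u^m\in S_0^{1,r}(\Omega)$ (chain rule plus local boundedness of $u$), so a Poincaré‑type inequality for $S_0^{1,r}(\Omega)$ on the bounded set $\Omega$ yields a constant $c_\Omega>0$ with $\int_\Omega u^{\ell-1}|\nabla_H u|^r\,dx\ge c_\Omega\int_\Omega u^{\,\ell-1+r}\,dx=:c_\Omega\,Z(t)$.

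It remains to close the differential inequality. If $r>\max\{p,q\}$ then $\ell-1+q<\ell-1+r$, so Hölder on $\Omega$ gives $\int_\Omega u^{\,\ell-1+q}\,dx\le|\Omega|^{1-\theta}Z^{\theta}$ with $\theta:=\frac{\ell-1+q}{\ell-1+r}\in(0,1)$, whence $E'(t)\ge \frac{\beta\ell c_\Omega}{2}Z-|\alpha|\ell|\Omega|^{1-\theta}Z^{\theta}-C_0$. If instead $r=q>p$ the two gradient‑free integrals coincide and $E'(t)\ge \ell\bigl(\frac{\beta c_\Omega}{2}-|\alpha|\bigr)Z-C_0$, whose coefficient is positive exactly when $\beta\gg|\alpha|$. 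Thus in either case $E'(t)\ge c_1 Z-c_2 Z^{\theta_0}-C_0$ for some $c_1>0$, $c_2\ge 0$ and $\theta_0\in(0,1)$. A further Hölder/Jensen estimate (using $\ell<\ell-1+r$) gives $Z\ge c_3 E^{\mu}$ with $\mu:=\frac{\ell-1+r}{\ell}=1+\frac{(r-1)(r-p)}{2r-p}>1$. Hence, for $E(t)$ large enough, $E'(t)\ge \frac{c_1}{2}Z\ge \frac{c_1c_3}{2}E(t)^{\mu}$; picking $M$ so that $E(0)=\int_\Omega u_0^{\frac{2r-p}{r-p}}\,dx>M$ places us in this regime at $t=0$, and since $E'>0$ there we stay in it, so an ODE comparison forces $E(t)\to\infty$ at a finite time $T^*$. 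As $E$ would be finite for every $t$ if $u\in C([0,t]\times\overline\Omega)$ for all $t$, we conclude $T_{\max}\le T^*<\infty$.

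I expect the main obstacle to be not the chain of inequalities but the rigorous justification of the energy identity for a merely weak solution — the absolute continuity of $E$ and the admissibility of $u^{\ell-1}$ as a test function — which needs an approximation/truncation argument built on $\partial_t u\in L^2_{\mathrm{loc}}((0,T_{\max});L^2(\Omega))$ together with the local boundedness of $u$; the Poincaré step similarly relies on $u^m\in S_0^{1,r}(\Omega)$. Once these points are settled, the only substantive computation is the Young‑inequality absorption, whose exact working is precisely what singles out the threshold exponent $\frac{2r-p}{r-p}$ appearing in the statement.
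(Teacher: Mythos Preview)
Your argument is correct and is essentially the paper's own proof: with $\kappa:=r/(r-p)$ your $\ell$ is exactly $\kappa+1$, your $E$ is $(\kappa+1)$ times the paper's $y$, and the key identity $\kappa-1=p\kappa/r$ is what makes your Young-inequality absorption collapse to a constant remainder; the Poincar\'e step, the H\"older treatment of the $q$-term, and the final exponent $\mu=(r+\kappa)/(\kappa+1)>1$ all coincide with the paper. The only cosmetic difference is that you apply Young pointwise while the paper first uses H\"older on the integral and then Young, and you are more explicit about the admissibility of $u^{\ell-1}$ as a test function (a point the paper passes over in silence).
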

\begin{proof}[Proof of Theorem \ref{thm_1.4}] Assume for a contradiction that $T_{\max }=\infty$. By $C_{1}$ and $C_{2}$ we denote positive constants which may vary from line to line. Set $\kappa=r /(r-p)$ and $y(t)=\frac{1}{\kappa+1} \int_{\Omega} u^{\kappa+1} dx$. Then, using $\kappa-1=\frac{p}{r-p}=\frac{p \kappa}{r}$, we have
$$
\begin{aligned}
y^{\prime}(t) &=\beta \int_{\Omega} u^{\kappa}|\nabla_{H} u|^{r} dx-\kappa \int_{\Omega} u^{\kappa-1}|\nabla_{H} u|^{p} dx-|\alpha| \int_{\Omega} u^{q+\kappa} dx \\
&=\beta \int_{\Omega} u^{\kappa}|\nabla_{H} u|^{r} dx-\kappa \int_{\Omega}\left(u^{\kappa}|\nabla_{H} u|^{r}\right)^{p / r} dx-|\alpha| \int_{\Omega} u^{q+\kappa} dx.
\end{aligned}
$$
For $r>q$, using H\"{o}lder's and Young's inequalities we get
$$
\begin{aligned}
\int_{\Omega}\left(u^{\kappa}|\nabla_{H} u|^{r}\right)^{p / r} dx & \leq\left(\int_{\Omega} u^{\kappa}|\nabla_{H} u|^{r} dx\right)^{p / r}|\Omega|^{(r-p) / r} \\
& \leq \epsilon \frac{p}{r} \int_{\Omega} u^{\kappa}|\nabla_{H} u|^{r} dx+C(\epsilon) \frac{r-p}{r}|\Omega|
\end{aligned}
$$
and
\begin{equation*}
    \begin{split}
\int_{\Omega} u^{q+\kappa} dx&=\int_{\Omega}\left(u^{r+\kappa}\right)^{\frac{q+\kappa}{r+\kappa}} dx \leq\left(\int_{\Omega} u^{r+\kappa} dx\right)^{\frac{q+\kappa}{r+\kappa}}|\Omega|^{\frac{r-q}{r+\kappa}}\\&
\leq \varepsilon \frac{q+\kappa}{r+\kappa} \int_{\Omega} u^{r+\kappa} dx+C(\varepsilon) \frac{r-q}{r+\kappa}|\Omega|.
    \end{split}
\end{equation*}
Then, by Poincar\'{e}'s (see, e.g. \cite[Formula 1.10]{RS17}) and reverse H\"{o}lder's inequalities we obtain
$$
\begin{aligned}
y^{\prime}(t) & \geq \frac{\beta r-\epsilon p}{r} \int_{\Omega} u^{\kappa}|\nabla_{H} u|^{r} dx-|\alpha| \varepsilon \frac{q+\kappa}{r+\kappa} \int_{\Omega} u^{r+\kappa} dx-C \\
&=\frac{\beta r-\epsilon p}{r}\left(\frac{r}{r+\kappa}\right)^{r} \int_{\Omega}|\nabla_{H} u ^{\frac{r+\kappa}{r}}|^{r} dx-|\alpha| \varepsilon \frac{q+\kappa}{r+\kappa} \int_{\Omega} u^{r+\kappa} dx-C \\
& \geq\left(\frac{\beta r-\epsilon p}{r}\left(\frac{r}{r+\kappa}\right)^{r} C^{\prime}-|\alpha| \varepsilon \frac{q+\kappa}{r+\kappa}\right) \int_{\Omega} u^{r+\kappa} dx-C \\
&=C_{1} \int_{\Omega} u^{r+\kappa} dx-C\\&
 \geq C_{1}\left(\int_{\Omega} u^{\kappa+1} dx\right)^{\frac{r+\kappa}{\kappa+1}}|\Omega|^{\frac{1-r}{\kappa+1}}-C_{2}\\&
 \geq C_{1}\left(\int_{\Omega} u^{\kappa+1} dx\right)^{\frac{r+\kappa}{\kappa+1}}-C_{2}.
\end{aligned}
$$
Thus, we have obtained
$$
y^{\prime}(t) \geq C_{1} y^{\frac{r+\kappa}{\kappa+1}}(t)-C_{2},
$$
where $C_{1}=C_{1}(p, r, q, \alpha, \beta, \epsilon, \varepsilon, \Omega, N_{1})$ and $C_{2}=C_{2}(p, r, q, \alpha, \beta, \epsilon, \varepsilon, \Omega, N_{1})>0$ with suitable $\epsilon$ and $\varepsilon$, and $N_{1}$ is the dimension of the first stratum of the group $\G$. Set
$$
M>\left(\frac{2 C_{2}}{C_{1}}\right)^{\frac{\kappa+1}{r+\kappa}},
$$
then if $y(0)>M$, we have
\begin{equation}
    \label{3.15}
y^{\prime}(t) \geq \frac{C_{1} y^{\frac{r+\kappa}{\kappa+1}}(t)}{2}.
\end{equation}
A contradiction then follows by integrating \eqref{3.15}, hence $T_{\max }<\infty$.

In the case $r=q$, the proof above is still valid for $\beta \gg|\alpha|$.
\end{proof}

As another application of the comparison principle, we now investigate the following initial boundary value problem for the $p$-sub-Laplacian, $1<p<\infty$,
\begin{equation}
    \label{psublap_pr}
\left\{\begin{array}{l}
{u_{t}-\mathcal{L}_{p} u=\alpha\sum_{i=1}^{n_{1}}|u|^{q_{i}-1}u+\gamma \sum_{i=1}^{n_{1}}|u|^{s_{i}-1}u, \quad x \in \Omega, \quad t>0,} \\
{u(t, x)=0, \quad x \in \partial \Omega, \quad t>0,} \\
{u(0, x)=u_{0}(x), \quad x \in \Omega,}
\end{array}\right.
\end{equation}
where $u_{0}(x) \geqslant 0$, $u_{0}(x) \not \equiv 0$, $u_{0} \in S_{0}^{1, \infty}(\Omega)$, and the parameters $\alpha$, $\gamma$, $q_{i}$ and $s_{i}$ will be determined later.
\begin{thm}\label{glob_exis_3.5}  Let $\Omega \subset \G$ be a bounded open set in a stratified Lie group with $N_{1}$ being the dimension of the first stratum. Let $\widetilde{s}=\min\{s_{i}\}$ and $\widetilde{q}=\min\{q_{i}\}$. Assume that $\alpha$, $\gamma$, $q_{i}$ and $s_{i}$ in \eqref{psublap_pr} satisfy one of the following conditions:
\begin{enumerate}[label=(\roman*)]
\item $\alpha>0$, $\gamma<0$, and $q_{i}\geq 1$ with $1<p<\widetilde{s}+1$ and $s_{i}<q_{i}$;
\item $\alpha<0$, $\gamma>0$, and $s_{i}\geq 1$ with $1<p<\widetilde{q}+1$ and $s_{i}>q_{i}$.
    \end{enumerate}
Then a weak solution of \eqref{psublap_pr} is globally in $t$-bounded, that is, there exists a constant $M$ depending only on $p$, $q_{i}$, $s_{i}$, $\alpha$, $\gamma$, $N_{1}$, $\Omega$ and $u_{0}$ such that for every $T>0$ we have $0\leq u\leq M$ on $(0,T)$.
\end{thm}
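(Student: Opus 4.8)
\noindent\emph{Strategy.} The plan is to follow the proof of Theorem~\ref{glob_exis_3.5_111}: in each of the two cases I would exhibit an explicit \emph{time-independent} super-solution $V=V(x)$ of \eqref{psublap_pr} with $V\ge u_{0}$ in $\Omega$ and $V\ge 0$ on $\partial\Omega$, and then invoke the comparison principle. Since \eqref{psublap_pr} is an instance of \eqref{eq_1}--\eqref{eq_3} with $\beta=0$ and with at least one of $\alpha,\gamma$ positive, Theorem~\ref{compar_thm} applies (the restriction $r_{j}\ge p_{j}/2$ is then vacuous); as $u\equiv 0$ is a sub-solution, the comparison principle also gives $u\ge 0$, so we obtain $0\le u\le V$ on $(0,T)\times\Omega$ for every $T>0$, and since $V$ does not depend on $t$ this is the asserted global bound with $M=\sup_{\overline\Omega}V$. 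Throughout I would fix $x_{0}=(x_{0}',x_{0}'')\in\G\setminus\Omega$, put $R=|x'-x_{0}'|$, and use $\varepsilon\le R<R'+1$ on $\Omega$ for a suitable $\varepsilon\in(0,1)$, with $R'=\max_{x=(x',x'')\in\Omega}|x'|<\infty$ --- exactly the setup of the proof of Theorem~\ref{glob_exis_3.5_111}; the identities \eqref{3.3}--\eqref{3.4} are what allow $\mathcal{L}_{p}$ to be computed in closed form on radial-type profiles.

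\emph{Case (i)} ($\alpha>0$, $\gamma<0$; normalise $\alpha=-\gamma=1$). Here I would take $V_{1}(x)=K_{1}e^{\sigma_{1}R}$, for which \eqref{3.3}--\eqref{3.4} give, as in the computation of $\mathcal{L}_{p}V_{1}$ in Theorem~\ref{glob_exis_3.5_111},
\[
\mathcal{L}_{p}V_{1}=(p-1)\sigma_{1}^{p}K_{1}^{p-1}e^{(p-1)\sigma_{1}R}+\frac{N_{1}-1}{R}\sigma_{1}^{p-1}K_{1}^{p-1}e^{(p-1)\sigma_{1}R}.
\]
The super-solution requirement, which after normalisation reads $-\mathcal{L}_{p}V_{1}\ge\sum_{i}V_{1}^{q_{i}}-\sum_{i}V_{1}^{s_{i}}$, becomes --- on dividing through by $K_{1}^{p-1}e^{(p-1)\sigma_{1}R}$ and using $\varepsilon\le R<R'+1$ to control the remaining exponential factors above and below uniformly on $\Omega$ --- a finite sum of monomials in $K_{1}$ with $\Omega$-uniformly bounded coefficients. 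The hypotheses $1<p<\widetilde s+1$ (so every $s_{i}>p-1$) together with the prescribed ordering between $\{s_{i}\}$ and $\{q_{i}\}$ are exactly what is needed to single out the dominant monomial and conclude that the inequality holds once $\sigma_{1}$ is fixed suitably (in terms of $R'+1$ and the exponent gaps, or $\sigma_{1}=1$ in a borderline subcase, paralleling \eqref{3.7}) and $K_{1}$ is chosen large; enlarging $K_{1}$ further so that $K_{1}e^{\sigma_{1}\varepsilon}\ge\|u_{0}\|_{L^{\infty}(\Omega)}$ secures $V_{1}\ge u_{0}$ in $\Omega$, while $V_{1}>0=u$ on $\partial\Omega$ is automatic. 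Theorem~\ref{compar_thm} then gives $0\le u(t,x)\le K_{1}e^{\sigma_{1}(R'+1)}<\infty$, uniformly in $t$.

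\emph{Case (ii)} ($\alpha<0$, $\gamma>0$; normalise $-\alpha=\gamma=1$). The argument is the same in spirit, the relevant hypothesis now being $1<p<\widetilde q+1$. One may run it either with an exponential profile $V_{2}=K_{2}e^{\sigma_{2}R}$ as above, or, imitating Part~(ii) of the proof of Theorem~\ref{glob_exis_3.5_111}, with a power profile $V_{2}=c\,R^{\sigma}$, $\sigma=p/(p-1)$, for which \eqref{3.3}--\eqref{3.4} give the \emph{constant} $\mathcal{L}_{p}V_{2}=N_{1}(c\sigma)^{p-1}$; the super-solution requirement then simplifies to $\sum_{i}c^{q_{i}}R^{\sigma q_{i}}\ge N_{1}(c\sigma)^{p-1}+\sum_{i}c^{s_{i}}R^{\sigma s_{i}}$, which, after bounding the powers of $R$ between $\varepsilon$ and $R'+1$, holds for $c$ large by the hypotheses, and taking $c$ also large enough that $c\,\varepsilon^{\sigma}\ge\|u_{0}\|_{L^{\infty}(\Omega)}$ gives $V_{2}\ge u_{0}$ in $\Omega$ and $V_{2}\ge 0$ on $\partial\Omega$. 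A degenerate subcase in which two exponents coincide would be handled as with $V_{3}$ in Theorem~\ref{glob_exis_3.5_111}, taking $V=c\,R^{\sigma}$ with $\sigma$ chosen large. In every case Theorem~\ref{compar_thm} delivers a bound on $u$ that does not depend on $t$.

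\emph{Main obstacle.} The only genuine work is the verification, pointwise and uniform in $x\in\Omega$, that the reduced algebraic inequality for the super-solution holds for a suitable choice of parameters: one must isolate the dominant power of the amplitude ($K_{1}$ or $c$) and of $R$ and check that the constraints on $p$, $q_{i}$, $s_{i}$ make the absorption term control the source term together with the $\mathcal{L}_{p}V$-contribution --- this is the counterpart of the roles of $p\le r+1$ and $p/2\le r$ in Theorem~\ref{glob_exis_3.5_111}. Everything else is routine and mirrors the proof of Theorem~\ref{glob_exis_3.5_111}: the smoothness and positivity of $V$ away from $x_{0}$, the initial and boundary comparisons, and the passage from the pointwise differential inequality to the weak super-solution inequality of Definition~\ref{def_weak} (test against a nonnegative $\phi$ and integrate by parts).
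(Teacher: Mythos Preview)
Your overall strategy---build an explicit time-independent radial super-solution out of $R=|x'-x_0'|$ and then invoke Theorem~\ref{compar_thm}---is exactly the paper's. The one concrete difference is the choice of profile in Case~(i): you propose the exponential $V=K_{1}e^{\sigma_{1}R}$ (by analogy with Part~(i) of Theorem~\ref{glob_exis_3.5_111}), whereas the paper uses the power profile
\[
V(t,x)=\frac{K}{\sigma}\,R^{\sigma},\qquad \sigma=\frac{p}{p-1},
\]
for Part~(i) and then simply declares Part~(ii) to be the same with the roles of $(\alpha,q_i)$ and $(\gamma,s_i)$ swapped. The point of the paper's choice is that for this particular $\sigma$ one gets, via \eqref{3.3}--\eqref{3.4}, the \emph{constant} $\mathcal{L}_{p}V=N_{1}K^{p-1}$, so the super-solution condition becomes the purely polynomial inequality
\[
\sum_{i}\Bigl(\tfrac{K}{\sigma}\Bigr)^{s_i}R^{\sigma s_i}\ \ge\ N_{1}K^{p-1}+\sum_{i}\Bigl(\tfrac{K}{\sigma}\Bigr)^{q_i}R^{\sigma q_i},
\]
which the paper splits into two sufficient conditions (its \eqref{3.20} and \eqref{3.21}) and solves by taking $K$ large, using $p-1<\widetilde{s}$ and the prescribed ordering of the exponents. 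Your exponential ansatz also leads to a solvable inequality under the same hypotheses, but the exponential factors in $R$ survive after normalising by $K^{p-1}e^{(p-1)\sigma R}$, so the bookkeeping is a bit heavier; the power profile is the tidier choice here precisely because no gradient term appears on the right-hand side of \eqref{psublap_pr}. In Case~(ii) you already mention the power option, which is what the paper (implicitly) uses.
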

\begin{rem}
We refer to \cite[Section 3]{RS18} for a similar investigation when $\alpha=-\gamma=1$, $n_{1}=1$.
\end{rem}
\begin{proof}[Proof of Theorem \ref{glob_exis_3.5}] We only prove Part (i), since Part (ii) is actually the same, but only $\alpha$ and $q_{i}$ are swapped by $\beta$ and $s_{i}$, respectively. For convenience, we assume that $\alpha=-\gamma=1$. We recall that $R^{\prime}=\underset{x=\left(x^{\prime}, x^{\prime \prime}\right) \in \Omega}{\rm max}\left|x^{\prime}\right|<\infty$ and $\varepsilon \leqslant\left|x_{0}^{\prime}-x^{\prime}\right|<R^{\prime}+1$ for any $x=\left(x^{\prime}, x^{\prime \prime}\right) \in \Omega$, where $x_{0}=\left(x_{0}^{\prime}, x_{0}^{\prime \prime}\right) \in \G \backslash \Omega$ and $\varepsilon \in(0,1)$. We also employ the following notations
$$
V_{4}(t, x):=\frac{K_{4}}{\sigma_{4}} R^{\sigma_{4}}, \quad \sigma_{4}=\frac{p}{p-1}, \quad R=\left|x^{\prime}-x_{0}^{\prime}\right|, \quad x=\left(x^{\prime}, x^{\prime \prime}\right) \in \Omega,
$$
and
$$
\mathcal{K}_{p} w:=w_{t}-\mathcal{L}_{p} w-\sum_{i=1}^{n_{1}}w^{q_{i}}+\sum_{i=1}^{n_{1}}w^{s_{i}}.
$$
Now, we look for a suitable positive $K_{4}$ such that $V_{4}(t, x)$ is a super-solution of \eqref{psublap_pr}. Then we have
$$
\mathcal{K}_{p} V_{4}=-N_{1} K_{4}^{p-1}-\sum_{i=1}^{n_{1}}\left(\frac{K_{4}}{\sigma_{4}}\right)^{q_{i}} R^{\frac{q_{i} p}{p-1}}+\sum_{i=1}^{n_{1}}\left(\frac{K_{4}}{\sigma_{4}}\right)^{s_{i}} R^{\frac{s_{i} p}{p-1}}.
$$
From this, we note that
$$
\mathcal{K}_{p} V_{4} \geq 0 \Longleftrightarrow \sum_{i=1}^{n_{1}}\left(\frac{K_{4}}{\sigma_{4}}\right)^{s_{i}} R^{\frac{s_{i} p}{p-1}} \geq N_{1} K_{4}^{p-1}+\sum_{i=1}^{n_{1}}\left(\frac{K_{4}}{\sigma_{4}}\right)^{q_{i}} R^{\frac{q_{i} p}{p-1}}.
$$
So, it is sufficient to choose $K_{4}$ such that
\begin{equation}
    \label{3.20}
    \sum_{i=1}^{n_{1}}\left(\frac{K_{4}}{\sigma_{4}}\right)^{s_{i}} R^{\frac{s_{i} p}{p-1}} \geq 2 N_{1} K_{4}^{p-1},
\end{equation}
\begin{equation}
    \label{3.21}
\left(\frac{K_{4}}{\sigma_{4}}\right)^{s_{i}} R^{\frac{s_{i} p}{p-1}}  \geq 2\left(\frac{K_{4}}{\sigma_{4}}\right)^{q_{i}} R^{\frac{q_{i} p}{p-1}}.
\end{equation}
The inequality \eqref{3.20} is satisfied if we take
$$
K_{4} \geq\left(2 N_{1}\right)^{\frac{1}{\widetilde{s}-p+1}} \left(\sum_{i=1}^{n_{1}}\frac{\varepsilon^{\frac{s_{i}p}{p-1}}}{\sigma_{4}^{s_{i}}}\right)^{-\frac{1}{\widetilde{s}-p+1}},
$$
provided that $\widetilde{s}=\min\{s_{i}\}>p-1$. Dividing the inequality \eqref{3.21} by $K_{4}^{q_{i}} \frac{R^{ \frac{s_{i}p}{p-1}}}{\sigma_{4}^{s_{i}}}$ we get
$$
K_{4}^{s_{i}-q_{i}} \geq 2\sigma_{4}^{s_{i}-q_{i}} R^{\frac{p(q_{i}-s_{i})}{p-1}},
$$
that is,
$$
K_{4} \geq 2\sigma_{4}\varepsilon^{-\frac{p}{p-1}}.
$$
We also need that $K_{4} \geq \frac{\sigma_{4} \|u_{0}\|_{L^{\infty}}}{\varepsilon^{\sigma_{4}}}$ to ensure $V_{4}(0, x) \geq u_{0}$. Thus, choosing $K_{4}$ as follows
\begin{equation*}
 K_{4} \geq \max \left\{\frac{\sigma_{4}\left\|u_{0}\right\|_{L^ \infty}}{\varepsilon^{\sigma_{4}}},\left(2 N_{1}\right)^{\frac{1}{\widetilde{s}-p+1}} \left(\sum_{i=1}^{n_{1}}\frac{\varepsilon^{\frac{s_{i}p}{p-1}}}{\sigma_{4}^{s_{i}}}\right)^{-\frac{1}{\widetilde{s}-p+1}}, 2\sigma_{4}\varepsilon^{-\frac{p}{p-1}}\right\},
\end{equation*}
we obtain $\mathcal{K}_{p}V_{4}\geq 0$ and $V_{4}(0, x) \geq u_{0}$. Clearly, we also have $V_{4}(t,x) \geq 0=u(t,x)$ on $\partial \Omega$. Therefore, we can conclude that $V_{4}(t,x)$ is a super-solution of \eqref{psublap_pr}. Then, the comparison principle yields that
\begin{equation}\label{3.10}
0\leq u(t,x) \leq \frac{K_{4}(R'+1)^{\frac{p}{p-1}}}{\sigma_{4}}<\infty, \quad R^{\prime}=\max _{x=\left(x^{\prime}, x^{\prime \prime}\right) \in \Omega}\left|x^{\prime}\right|.
\end{equation}
Since the right-hand side of \eqref{3.10} is independent of $t$, we can conclude that $u(t, x)$ is globally in $t$-bounded.
\end{proof}

By the same procedure as in the proof of Theorem \ref{thm_1.3}, one can obtain the following result for the problem \eqref{psublap_pr} when $n_{1}=1$:
\begin{thm}\label{anoth_eq_appl_thm} Let $\alpha>0$, $\gamma<0$, $p>1$ and $s>0$. If $q>\max \{s, p-1, 1\}$, then the solution of the problem \eqref{psublap_pr} blows up in finite time for some large $u_{0}>0$.
\end{thm}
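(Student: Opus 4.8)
The plan is to reproduce the self-similar sub-solution argument from the proof of Theorem \ref{thm_1.3}, with the gradient term $|\nabla_H w|^{r}$ there replaced by the reaction term coming from $\gamma$. Normalizing $\alpha=-\gamma=1$ and writing, as in the proof of Theorem \ref{glob_exis_3.5}, $\mathcal{K}_p w:=w_t-\mathcal{L}_p w-w^{q}+w^{s}$, I look for a positive sub-solution of \eqref{psublap_pr} (with $n_1=1$) of the form
$$v(t,|x'|):=\frac{1}{(1-\delta t)^{k_1}}\,F\!\left(\frac{|x'|}{(1-\delta t)^{k_2}}\right),\qquad t_0\le t<\tfrac1\delta,$$
where $F(y)=1+\frac{A}{\sigma}-\frac{y^{\sigma}}{\sigma A^{\sigma-1}}$, $\sigma=\frac{p}{p-1}$, and $v$ is supported in $\{|x'|<R_1(1-\delta t)^{k_2}\}$ with $R_1=(A^{\sigma-1}(A+\sigma))^{1/\sigma}$. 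I take $k_1=\frac{1}{q-1}$ (so that $qk_1=k_1+1$), $0<k_2<\frac{q-p+1}{p(q-1)}$ (the upper bound is positive precisely because $q>p-1$), $A>\frac{k_1}{k_2}$ and $\delta<\bigl(k_1(1+\tfrac{A}{\sigma})\bigr)^{-1}$; the constraint on $k_2$ that previously involved $r$ is no longer present, its role now being taken by the hypothesis $q>s$.

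The computation is the same as in Theorem \ref{thm_1.3}. From $F'(y)=-y^{\sigma-1}/A^{\sigma-1}$ and $(\sigma-1)(p-1)=1$ one gets the key identity $(|F'|^{p-2}F')'+\frac{N_1-1}{y}|F'|^{p-2}F'=-\frac{N_1}{A}$ on $0<y<R_1$, hence
$$\mathcal{K}_p v=\frac{\delta(k_1F+k_2yF')}{(1-\delta t)^{k_1+1}}+\frac{N_1/A}{(1-\delta t)^{(p-1)k_1+pk_2}}-\frac{F^{q}}{(1-\delta t)^{k_1+1}}+\frac{F^{s}}{(1-\delta t)^{sk_1}}.$$
Since $k_1+1>(p-1)k_1+pk_2$ (equivalent to $k_2<\frac{q-p+1}{p(q-1)}$) and $k_1+1>sk_1$ (equivalent to $q>s$), after factoring out $(1-\delta t)^{-(k_1+1)}$ both the $\mathcal{L}_p$-term and the $v^{s}$-term carry a positive power of $(1-\delta t_0)$, so they can be made arbitrarily small by choosing $t_0=t_0(p,q,s,\delta,N_1,A)$ sufficiently close to $\frac1\delta$. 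Splitting into $0\le y\le A$ (where $1\le F\le1+\frac{A}{\sigma}$ and $-1\le F'\le0$) and $A\le y\le R_1$ (where $0\le F\le1$ and $-(R_1/A)^{\sigma-1}\le F'\le-1$), one bounds $k_1F+k_2yF'$ by $k_1(1+\frac{A}{\sigma})$ in the first range and by $k_1-k_2A$ in the second; then $\delta k_1(1+\frac{A}{\sigma})-1<0$ by the choice of $\delta$, while $k_1-k_2A<0$ by the choice of $A$, so $\mathcal{K}_p v\le0$ on $[t_0,\frac1\delta)\times\{|x'|<R_1(1-\delta t)^{k_2}\}$, i.e. $v$ is a sub-solution there.

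Finally, arguing exactly as in the last part of the proof of Theorem \ref{thm_1.3}: after a group translation placing the identity in $\Omega$, pick $t_0$ close to $\frac1\delta$ so that $R_1(1-\delta t_0)^{k_2}<\max_{x\in\Omega}|x'|$ and, for large $u_0$, $u_0\ge v(t_0,\cdot)$ on $\Omega\cap\{|x'|<R_1(1-\delta t_0)^{k_2}\}$; since $v(t_0,\cdot)\le0$ on the rest of $\overline\Omega$ and $v\le0$ on $(t_0,\frac1\delta)\times\partial\Omega$, Theorem \ref{compar_thm} (applicable because $\alpha>0$ and $\beta=0$) gives $u(t,x)\ge v(t+t_0,x)$ on the shrinking cylinder, and $v(t,0)\to\infty$ as $t\to\frac1\delta$ forces $T_{\max}\le\frac1\delta-t_0<\infty$. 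The only genuine work is the case distinction verifying $\mathcal{K}_p v\le0$; the one new point compared with Theorem \ref{thm_1.3} is that the \emph{wrong-sign} term $+v^{s}$ must be absorbed as a lower-order term, and this is exactly where the hypothesis $q>s$ is used.
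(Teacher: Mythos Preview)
Your proposal is correct and follows exactly the approach sketched in the paper: the paper's own proof simply says to reuse the sub-solution $v$ from Theorem~\ref{thm_1.3} and apply the comparison principle, and you have carried out this verification in detail. In particular, your observation that the $k_2$-constraint coming from $r$ disappears and is replaced by the scalar condition $k_1+1>sk_1\Longleftrightarrow q>s$ is precisely the one new point needed to adapt the argument.
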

\begin{proof}[Proof of Theorem \ref{anoth_eq_appl_thm}] As in the proof of Theorem \ref{thm_1.3}, one can show that the same function $v$ from \eqref{v_func} is a sub-solution of the problem \eqref{psublap_pr}. Then, the comparison principle (Theorem \ref{compar_thm}) concludes the proof.
\end{proof}


\begin{thebibliography}{BFKG12}

\bibitem[Att12]{Att12}
A.~Attouchi.
\newblock Well-posedness and gradient blow-up estimate near the boundary for a
Hamilton-Jacobi equation with degenerate diffusion.
\newblock {\em J. Differential Equations}, 253:2474--2492, 2012.

\bibitem[BLU07]{BLU07}
A.~Bonfiglioli, E.~ Lanconelli and F.~Uguzzoni.
\newblock {\em Stratified Lie groups and potential theory for their sub-Laplacians}, Springer, Berlin/Heidelberg, 2007.

\bibitem[ZL13]{ZL13}
Z.~Zhang and Y.~Li.
\newblock Blowup and existence of global solutions to nonlinear parabolic equations with degenerate diffusion.
\newblock {\em EJDE}, 2013(264):1--17, 2013.

\bibitem[FR16]{FR16}
V.~Fischer and M.~Ruzhansky.
\newblock {\em Quantization on nilpotent Lie groups}, volume 314 of {\em
  Progress in Mathematics}.
\newblock Birkh\"auser, 2016. (open access book)

\bibitem[FR17]{FR17}
V.~Fischer and M.~Ruzhansky.
\newblock Sobolev spaces on graded groups.
\newblock {\em Ann. Inst. Fourier (Grenoble)}, 67:1671--1723, 2017.

  \bibitem[Fol75]{Fol75}
G.~B.~Folland.
\newblock Subelliptic estimates and function spaces on nilpotent Lie groups.
\newblock {\em Ark. Mat.}, 13:161--207, 1975.

\bibitem[FS82]{FS-book}
G.~B. Folland and E.~M. Stein.
\newblock {\em Hardy spaces on homogeneous groups}, volume~28 of {\em
  Mathematical Notes}.
\newblock Princeton University Press, Princeton, N.J.; University of Tokyo
  Press, Tokyo, 1982.



\bibitem[HN79]{HN-79}
B.~Helffer and J.~Nourrigat.
\newblock Caracterisation des op\'erateurs hypoelliptiques homog\`enes
  invariants \`a gauche sur un groupe de {L}ie nilpotent gradu\'e.
\newblock {\em Comm. Partial Differential Equations}, 4(8):899--958, 1979.

\bibitem[Mil80]{Miller:80}
K.~G. Miller.
\newblock Parametrices for hypoelliptic operators on step two nilpotent {L}ie
  groups.
\newblock {\em Comm. Partial Differential Equations}, 5(11):1153--1184, 1980.

  \bibitem[RS17]{RS17}
M.~Ruzhansky and D.~Suragan.
\newblock On horizontal Hardy, Rellich, Caffarelli-Kohn-Nirenberg and $p$-sub-Laplacian inequalities on stratified groups.
\newblock {\em J. Differential Equations}, 262:1799--1821, 2017.

  \bibitem[RS18]{RS18}
M.~Ruzhansky and D.~Suragan.
\newblock A comparison principle for nonlinear heat Rockland
operators on graded groups.
\newblock {\em Bull. London Math. Soc.}, 50:753--758, 2018.

\bibitem[RS19]{RS-book}
M.~Ruzhansky and D.~Suragan.
\newblock {\em Hardy inequalities on homogeneous groups}, volume 537 of {\em Progress in Mathematics}.
\newblock Birkh\"auser, 2019. (open access book)

\bibitem[tER97]{tER:97}
A.~F.~M. ter Elst and D.~W. Robinson.
\newblock Spectral estimates for positive {R}ockland operators.
\newblock In {\em Algebraic groups and {L}ie groups}, volume~9 of {\em Austral.
  Math. Soc. Lect. Ser.}, pages 195--213. Cambridge Univ. Press, Cambridge,
  1997.




\end{thebibliography}
\end{document}